\newcommand{\uvec}[1]{\mathit{uvec}(#1)}
\newcommand{\tr}[1]{\operatorname{tr}(#1)}
\newcommand{\re}{\mathbb{R}}
\newcommand{\mR}{\mathbb{R}}
\newcommand{\mC}{\mathbb{C}}
\newcommand{\cpx}{\mathbb{C}}
\newcommand{\N}{\mathbb{N}}
\def\af{\alpha}
\def\rank{\mbox{rank}}
\newcommand{\reff}[1]{(\ref{#1})}
\renewcommand{\vec}[1]{\mathit{vec}(#1)}
\newcommand{\qmod}[1]{\mbox{QM}[#1]}
\newcommand{\ideal}[1]{\mbox{Ideal}[#1]}
\newcommand{\st}{\mathit{s.t.}}
\newcommand{\bdes}{\begin{description}}
	\newcommand{\edes}{\end{description}}
\newcommand{\bal}{\begin{align}}
	\newcommand{\eal}{\end{align}}
\newcommand{\bnum}{\begin{enumerate}}
	\newcommand{\enum}{\end{enumerate}}
\newcommand{\bit}{\begin{itemize}}
	\newcommand{\eit}{\end{itemize}}
\newcommand{\bea}{\begin{eqnarray}}
	\newcommand{\eea}{\end{eqnarray}}
\newcommand{\be}{\begin{equation}}
	\newcommand{\ee}{\end{equation}}
\newcommand{\baray}{\begin{array}}
	\newcommand{\earay}{\end{array}}
\newcommand{\bsry}{\begin{subarray}}
	\newcommand{\esry}{\end{subarray}}
\newcommand{\bca}{\begin{cases}}
	\newcommand{\eca}{\end{cases}}
\newcommand{\bcen}{\begin{center}}
	\newcommand{\ecen}{\end{center}}
\newcommand{\bbm}{\begin{bmatrix}}
	\newcommand{\ebm}{\end{bmatrix}}
\newcommand{\bmx}{\begin{matrix}}
	\newcommand{\emx}{\end{matrix}}
\newcommand{\bpm}{\begin{pmatrix}}
	\newcommand{\epm}{\end{pmatrix}}
\newcommand{\btab}{\begin{tabular}}
	\newcommand{\etab}{\end{tabular}}
\theoremstyle{plain}
\newtheorem{theorem}{Theorem}[section]
\newtheorem{prop}[theorem]{Proposition}
\newtheorem{lemma}[theorem]{Lemma}
\newtheorem*{claim*}{Claim}
\newtheorem{thm}[theorem]{Theorem}
\theoremstyle{definition}
\newtheorem{exm}[theorem]{Example}
\newtheorem{alg}[theorem]{Algorithm}
\newtheorem{remark}[theorem]{Remark}
\numberwithin{equation}{section}
\numberwithin{table}{section}
\begin{document}

\title[Testing copositivity over the semidefinite cone]
{A  finite-termination  algorithm for testing copositivity over the semidefinite cone}

\author[Lei Huang]{Lei Huang}
\address{Lei Huang,Department of Mathematics,
	University of California San Diego,
	9500 Gilman Drive, La Jolla, CA, USA, 92093.}
\email{leh010@ucsd.edu}

\author[Lingling Xie]{Lingling Xie}
\address{Lingling Xie,  School of Mathematics and Statistics, Hefei Normal University,
No. 327 Jinzhai Road, Hefei, Anhui, China, 230061.}
\email{linglingxie@amss.ac.cn}

\subjclass[2010]{15A69, 15B48, 90C22, 90C26}

\keywords{copositivity, semidefinite cone, matrix optimization, semidefinite relaxation}

\maketitle

\begin{abstract}	

This paper proposes an efficient algorithm  for testing copositivity of homogeneous polynomials over the positive semidefinite cone. The algorithm is based on a novel matrix optimization reformulation and requires solving a hierarchy of semidefinite programs.  Notably, it always terminates in finitely many iterations. If a homogeneous polynomial is copositive over the positive semidefinite cone, the algorithm provides a certificate; otherwise, it returns a vector that refutes copositivity. Building on a similar idea, we further propose an algorithm to test copositivity  over the direct product of the positive semidefinite cone and the nonnegative orthant.
 Preliminary numerical experiments
demonstrate the effectiveness of the proposed methods.

\end{abstract}

\section{Introduction}

Let $K\subseteq \mR^n$ be a nonempty   closed convex cone. An $n$-by-$n$ real symmetric matrix $A$ is said to be  {\it $K$-copositive} or {\it copositive over  $K$}   if 
the associated quadratic polynomial $x^TAx\geq 0$ for all $x\in K$. The set of all $K$-copositive matrices  forms a closed convex cone,  denoted by $\mathcal{COP}^n(K)$, which is also referred to as the set-semidefinite cone in some literature \cite{ank21,egpj13}.  A matrix $B$ is called $K$-completely positive if  it can be expressed as $B=\sum\limits_{i=1}^t u_i^Tu_i$ for some vectors $u_i\in K$ $(i=1,\dots,t)$.  The set of all  $K$-completely positive matrices is denoted by $\mathcal{CP}^n(K)$, and its dual cone is $\mathcal{COP}^n(K)$. 
When $K$ is the nonnegative orthant  (i.e., $K=\mR_{+}^n$), the cones $\mathcal{COP}^n(K)$ and $\mathcal{CP}^n(K)$ reduce to  the standard copositive cone and completely positive cone, respectively. For an introduction to the basic theory of copositive matrices, we refer to \cite{boi12,bu12,dum10,hus10}.

Linear
 optimization over   $\mathcal{COP}^n(K)$ or $\mathcal{CP}^n(K)$, known as    copositive programming, has broad applications, as many NP-hard problems can be  formulated as copositive programs. In particular, there is high interest when $K$ is the nonnegative orthant, the second order cone, the positive semidefinite cone or their direct products. For instance, 
many nonconvex quadratic programs can be reformulated as  copositive programs over the nonnegative orthant  \cite{bu09},  the direct product of   a nonnegative orthant and second-order cones  \cite{bh12}, the direct product of   a nonnegative orthant and   a positive semidefinite cone \cite{bh12}, or  general cones \cite{egpj13,stz03}.   In \cite{bmp16},  a rank-constrained semidefinite programming 
problem is formulated as a copositive program over  the direct product of a nonnegative orthant and three positive semidefinite cones. Xu and  Hanasusanto  \cite{xg24}  derive copositive programming reformulations over the general cones  in the context of popular linear decision rules for generic  multistage robust optimization  problems. In \cite{bg23}, applications of copositive
programming in robust optimization and stochastic optimization are discussed.  For more applications in quadratic programming, polynomial optimization, graph theory and optimization under uncertainty and risk, we  refer to \cite{bg21,vdng05,gmak23,kkt20,stz03,xb18}.

A fundamental problem in   copositive programming is to test whether a given matrix $A$ is $K$-copositive for a given closed convex cone $K$. When $K$ is the nonnegative orthant $\mR^n_+$, it is well-known that this recognition problem is  co-NP complete \cite{dpgl14,mkks87}. Many efficient methods have been proposed for testing  $\mR^n_+$-copositive matrices,    based on linear programming \cite{vdng07,dp19},  mixed-integer linear programming \cite{ansk21}, simplicial partition \cite{bsdm08,sbd12}, difference of convex programming \cite{boe13,dmhj13}, polynomial optimization \cite{nyz18},  branch-and-bound algorithms \cite{buvd08,vvdng05}.  When $K$ is the second order cone, Loewy and Schneider \cite{loe75} showed that $A$ is  $K$-copositive if and only if there exists  $\mu\geq 0$ such that the matrix $A-\mu J$ is positive semidefinite, where $J$ is the diagonal matrix with entries $-1,\dots,-1,1$ on the diagonal. Thus, copositivity over the second-order cone can be detected by solving the semidefinite feasibility problem. When $K$ is the positive semidefinite cone $\mathcal{S}_+^{n}$, a gradient projection method  was introduced in 
\cite{fgnr24} to  test $\mathcal{S}_+^{n}$-copositivity. This algorithm can certify that  a matrix $A$ is not copositive if the output is negative. However, it is hard to certify that $A \in \mathcal{COP}^n(K)$. For a general closed convex cone $K$,  approximation hierarchies for  $\mathcal{COP}^n(K)$  exist, which are   sequences of
cones  described  by linear 
inequalities or linear matrix
inequalities;   see \cite{dpj13,ank21,egpj13,las14,mnn24a,mnn24b,pap00,VLau15,zvp06}. For most of these methods, if $A$  lies in
the interior of  $\mathcal{COP}^n(K)$,  $K$-copositivity can be detected. However,  it is more challenging to obtain a certificate when $A$  lies  on the boundary of $\mathcal{COP}^n(K)$ or  to provide a refutation when  $A$ does not belong to  $\mathcal{COP}^n(K)$.

\subsection*{Contributions}
This paper  studies the problem of testing $\mathcal{S}^n_+$-copositivity  of homogeneous   polynomials.  
 Let   
\[
x:=(x_{11},x_{12},\dots,x_{1n},x_{22},x_{23},\dots,x_{n-1,n},x_{nn})
\]
be  the vector consisting of $\sigma(n):=\frac{1}{2}n(n+1)$ variables, and let $X(x)$
be the $n$-by-$n$ symmetric  matrix whose $(i,j)$-th entry is  $x_{ij}$ for $i\leq j$. A homogeneous polynomial $f(x)$ is  said to be  {\it $\mathcal{S}^n_+$-copositive}  if $f(x)\geq 0$ for every $x\in \mR^{\sigma(n)}$ satisfying $X(x)\succeq 0$. Note that a  symmetric matrix $A\in \mR^{\sigma(n)\times \sigma(n)}$ is $\mathcal{S}^n_+$-copositive if and only if the quadratic homogeneous polynomial $x^TAx$ is $\mathcal{S}^n_+$-copositive. 

Consider the  polynomial matrix optimization problem
 \be  \label{int:copo}
\left\{ \baray{rl}
\min & f(x)  \\
\st &\operatorname{Tr}(X(x))=1,\\
& 	X(x) \succeq 0, \\
\earay \right.
\ee
where the inequality $X(x) \succeq 0$ means that 
$X(x)$ is positive semidefinite. 
It is clear that $f(x)$ is   $\mathcal{S}^n_+$-copositive if and only if the optimal value of  \reff{int:copo} is nonnegative. The matrix  Moment--SOS  hierarchy  proposed in \cite{HDLJ,schhol}, which consists of a hierarchy of nested semidefinite relaxations, can be  applied to solve  \reff{int:copo} globally.   However, this hierarchy only guarantees asymptotic convergence and may fail to have finite convergence (see \cite{hn24}). Moreover,  it is often difficult to certify  finite convergence in practice even when  it does occurs, as the commonly used flat truncation condition \cite{HDLJ,Lau09,niebook} ( a key criterion for detecting finite convergence) may not be satisfied.

In this paper, we propose an efficient algorithm with finite  termination to test $\mathcal{S}^n_+$-copositivity. Our major results are:

\bit

\item We investigate the first order optimality conditions of  \reff{int:copo} and derive explicit expressions for the  Lagrange multipliers in terms of the decision variable $x$. Then, we propose a strengthened reformulation of \reff{int:copo}. This is inspired by the recent work \cite{hnlme}.

\item By utilizing the matrix Moment-SOS relaxations to solve the strengthened reformulation, we propose Algorithm \ref{ag:CopoSDP}   to test $\mathcal{S}^n_+$-copositivity of homogeneous polynomials, which involves solving a sequence of semidefinite programs.  Interestingly, the algorithm always terminates in finitely many iterations. If a homogeneous polynomial $f(x)$ is $\mathcal{S}^n_+$-copositive, Algorithm \ref{ag:CopoSDP}  provides a certificate;  otherwise, it returns a vector $u\in \mR^{\sigma(n)}$ such that $	X(u) \succeq 0$ and $f(u)<0$, thereby refuting  $\mathcal{S}^n_+$-copositivity. To the best of the authors' knowledge, this is the first algorithm that can test $\mathcal{S}^n_+$-copositivity for all homogeneous polynomials, with a guarantee of finite termination.

\item    Following a similar idea,    we  propose an algorithm to test copositivity of homogeneous polynomials over  the direct product of the positive semidefinite cone and the nonnegative orthant (see Algorithm \ref{ag2:CopoSDP}). It also has finite termination.

\eit

\vspace{0.3cm}
  
  	The paper is organized as follows.
  Section~\ref{sc:pre} reviews  optimality conditions for nonlinear semidefinite optimization
  and some basics in  polynomial matrix optimization.  Section~\ref{sec:refor} constructs the  strengthened reformulation of \reff{int:copo} and study   basic properties  of its matrix Moment-SOS relaxations.
   Section~\ref{sec:alo} presents the algorithm and proves its finite termination. Section \ref{sc:othsem}  presents an algorithm to test copositivity over the direct product of the positive semidefinite cone and the nonnegative orthant. Section~\ref{sec:num} reports some preliminary numerical experiments.
  We  make some conclusions and discussions in Section~\ref{sec:dis}.

\section{Preliminaries}
\label{sc:pre}

	\subsection*{Notation}\label{sec:notation}
The symbol $\mathbb{N}$ (resp., $\mathbb{R}$, $\cpx$) denotes the set of
nonnegative integers (resp., real numbers, complex numbers). The notation $e_i$ denotes the   
$i$th standard basis vector. For a real number $t,\lceil t\rceil$ represents the smallest integer  greater than or equal to $t$. For a matrix $A$, $A^{T}$ denotes its transpose, $\|A\|_F$ represents its Frobenius norm and $\operatorname{tr}(A)$ denotes its trace. 
Let $\mathbb{R}[x]:=\mathbb{R}\left[x_1, \ldots, x_n\right]$ (resp., $\mathbb{C}[x]$)
be the ring of  polynomials in $x:=\left(x_1, \ldots, x_n\right)$ with real coefficients (resp., complex coefficients), and let $\mathbb{R}[x]_d$
be the set of polynomials with degrees at most $ d$. The notation  $ \operatorname{deg}(p)$ denotes the total degree of a polynomial $p$.
Let   $\mathcal{S}^{\ell}$ 
	(resp., $\mathcal{S}_{+}^{\ell}$) denote
the set of all $\ell$-by-$\ell$  real symmetric matrices
(resp.,  positive semidefinite matrices).
For $\af = (\af_1, \ldots, \af_n) \in \N^n$ and an integer $d>0$,
define
\[
|\af| \coloneqq \af_1 + \cdots + \af_n,\quad \sigma(d)  \, \coloneqq \,   d(d+1)/2, \quad
\mathbb{N}_d^n \, \coloneqq \,  \left\{\alpha \in \mathbb{N}^n \mid
|\af| \leq d\right\},
\]
and  denote by $[x]_d^T$ the vector of all monomials with degrees $\leq d$, ordered lexicographically:
\[
[x]_d^T :=\, [1,  x_1, x_2, \ldots, x_1^2, x_1x_2, \ldots,
x_1^d, x_1^{d-1}x_2, \ldots, x_n^d ].
\]
For a smooth function $f(x)$,
we denote its gradient by $\nabla f(x)$ and its partial derivative with respect to $x_i$  by $f_{x_i}$.


\subsection{Optimality conditions for nonlinear semidefinite optimization}
\label{seccq}

In this subsection, we review  optimality conditions for nonlinear semidefinite optimization; see \cite{shap,shuni,sunde,yaya} for more details.

For an $m\times m$  smooth symmetric matrix-valued function $G(x)= ( G_{st}(x))_{s,t=1,\ldots,m}$, the derivative of  $G$ at $x$ is the linear mapping
$\nabla G(x): \mathbb{R}^n \rightarrow \mathcal{S}^m$
such that
\[
d \, := \, \left(d_1, \ldots, d_n\right) \, \mapsto \,
\nabla G(x)[d] \, := \, \sum_{i=1}^n d_i \nabla_{x_i} G(x),
\]
where $\nabla_{x_i} G(x) $ denotes the partial derivative of $G$
with respect to $x_i$, i.e.,
\[
\nabla_{x_i} G(x) \, = \, \left(\frac{\partial G_{st}(x)}{\partial x_i}\right)_{s,t=1,\dots,m}.
\]
The adjoint of $\nabla G(x)$ is the linear mapping
$\nabla G(x)^*: \mathcal{S}^m \rightarrow \mathbb{R}^n$
such that 
\[
X \, \mapsto \,
\nabla G(x)^*[X] \, := \, \bbm \tr{\nabla_{x_1} G(x)X}
& \cdots  & \tr{\nabla_{x_n} G(x), X} \ebm^T.
\]
	For a point $u \in \mR^n$ with $G(u) \succeq 0$, let $E$ be a matrix whose column vectors form a basis of the kernel 
	of $G(u)$.
	The (Bouligand) tangent cone to  $\mathcal{S}_{+}^m$
	at $G(u)$ is
	\[
	T_{\mathcal{S}_{+}^m}(G(u))  = \left\{N \in \mathcal{S}^m: E^T N E \succeq 0\right\}.
	\]
	The lineality space at $u$ is
	\[
	\operatorname{lin}\left(T_{\mathcal{S}_{+}^m}(G(u))\right) \, =  \,
	\left\{N \in \mathcal{S}^m: E^T N E=0\right\},
	\]
	which is the largest  linear subspace contained in $T_{\mathcal{S}_{+}^m}(G(u))$. 
	
	Consider the  matrix
	optimization
	\be  \label{nsdp}
	\left\{ \baray{rl}
	\min & f(x)  \\
	\st & h_1(x)=0,\dots,h_{\ell}(x)=0,\\
	& 	G_1(x)\succeq 0,\dots,G_s(x)\succeq 0, \\
	\earay \right.
	\ee
	where $f(x),h_1(x),\dots,h_{\ell}(x)$ are  smooth functions, and each $G_t(x)$ is an $m_t\times m_t$  smooth symmetric  matrix-valued function.   
	Let $u$ be a feasible point of \reff{nsdp}.	Denote the Jacobian operator of $h:=(h_1,\dots,h_{\ell})$ at $u$ by $\mathcal{J}h(u)$,
	The {\it nondegeneracy condition} (NDC)
	is said to hold at $u$ if
	\be \label{CQ}
	\left[\begin{array}{c}
		\mathcal{J}h(u) \\
		\nabla G_1(u)\\
		\vdots\\
		\nabla G_s(u)
	\end{array}\right] \mathbb{R}^n+\left[\begin{array}{c}
		0 \\
		\operatorname{lin}\left(T_{\mathcal{S}_{+}^{m_1}}(G_1(u))\right)\\
		\vdots\\
		\operatorname{lin}\left(T_{\mathcal{S}_{+}^{m_s}}(G_s(u))\right)
	\end{array}\right]=\left[\begin{array}{l}
		\mathbb{R}^{\ell} \\
		\mathcal{S}^{m_1}\\
		\vdots\\
		\mathcal{S}^{m_s}
	\end{array}\right].
	\ee
 The following proposition provides an equivalent description for the  NDC.

	\begin{prop}[\cite{shuni,sunde}] \label{equindc}
		Suppose $u$ is a feasible point of \reff{nsdp} and  $\rank~ G_t(u)=r_t$ for $t=1,\dots,s$. Let   $\{q_1^{(t)},\dots,q_{m_t-r_t}^{(t)}\}$ be an arbitrary basis for the kernel of  $G_t(u)$. Then, the NDC \reff{CQ} holds at  $u$ if and only if the following vectors are linearly independent:
	{\small	\[
	\nabla h_1(u),\dots, \nabla h_{\ell}(u),	\nabla G_t(u)^*[\frac{q^{(t)}_i (q^{(t)}_j)^{T}+q^{(t)}_j (q^{(t)}_i)^{T}}{2}]~( 1 \leq i \leq j \leq m_t-r_t,~t=1,\dots,s).
	\]}
	\end{prop}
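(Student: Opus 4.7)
The plan is to translate the nondegeneracy condition, which is a statement that a Minkowski sum of subspaces fills the ambient product space, into its dual statement via orthogonal complements, and then to identify those orthogonal complements explicitly. Concretely, I work in the inner product space
$\mathcal{Y} := \mathbb{R}^{\ell} \times \mathcal{S}^{m_1} \times \cdots \times \mathcal{S}^{m_s}$ equipped with the standard Euclidean inner product on $\mathbb{R}^\ell$ and the trace inner product on each $\mathcal{S}^{m_t}$. Let $V$ denote the range of the block linear map in \reff{CQ} (acting on $d \in \mathbb{R}^n$), and let $W$ denote the lineality-space block on the right. Since both $V$ and $W$ are linear subspaces of $\mathcal{Y}$, the identity $V + W = \mathcal{Y}$ is equivalent to $V^\perp \cap W^\perp = \{0\}$. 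The whole proof then reduces to computing these two orthogonal complements.

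First, I unwind $V^\perp$ by duality. An element $(\lambda, Z_1, \dots, Z_s) \in \mathcal{Y}$ lies in $V^\perp$ iff
\[
\lambda^T \mathcal{J}h(u) d + \sum_{t=1}^s \langle Z_t, \nabla G_t(u)[d]\rangle = 0 \quad \text{for all } d \in \mathbb{R}^n.
\]
Moving the adjoint across and using $\lambda^T \mathcal{J}h(u) d = \langle \sum_j \lambda_j \nabla h_j(u), d\rangle$ and $\langle Z_t, \nabla G_t(u)[d]\rangle = \langle \nabla G_t(u)^*[Z_t], d\rangle$, this is equivalent to
\[
\sum_{j=1}^\ell \lambda_j \nabla h_j(u) + \sum_{t=1}^s \nabla G_t(u)^*[Z_t] = 0.
\]

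Next I identify $W^\perp$. The $t$-th block of $W^\perp$ is the orthogonal complement in $\mathcal{S}^{m_t}$ of $\mathrm{lin}(T_{\mathcal{S}_+^{m_t}}(G_t(u))) = \{N : E_t^T N E_t = 0\}$, where $E_t$ is the $m_t \times (m_t - r_t)$ matrix whose columns are $q_1^{(t)}, \dots, q_{m_t-r_t}^{(t)}$. I claim this orthogonal complement equals $\{E_t Y E_t^T : Y \in \mathcal{S}^{m_t - r_t}\}$. The inclusion is immediate from $\langle E_t Y E_t^T, N\rangle = \mathrm{tr}(Y E_t^T N E_t)$, and equality follows by a dimension count since the map $N \mapsto E_t^T N E_t$ is surjective onto $\mathcal{S}^{m_t-r_t}$ (reduce $E_t$ to $[I;0]$ by a change of basis). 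A basis for this complement is $\{\tfrac{1}{2}(q_i^{(t)} (q_j^{(t)})^T + q_j^{(t)} (q_i^{(t)})^T) : 1 \leq i \leq j \leq m_t - r_t\}$, since the columns of $E_t$ are linearly independent. Therefore each $Z_t$ in $W^\perp$ admits a unique expansion $Z_t = \sum_{i\le j} c_{ij}^{(t)} \cdot \tfrac{1}{2}(q_i^{(t)} (q_j^{(t)})^T + q_j^{(t)} (q_i^{(t)})^T)$.

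Combining the two descriptions, $V^\perp \cap W^\perp = \{0\}$ becomes the statement that the only way to choose scalars $\lambda_j$ and $c_{ij}^{(t)}$ with
\[
\sum_{j=1}^\ell \lambda_j \nabla h_j(u) + \sum_{t=1}^s \sum_{i \le j} c_{ij}^{(t)} \, \nabla G_t(u)^*\!\left[\tfrac{1}{2}(q_i^{(t)} (q_j^{(t)})^T + q_j^{(t)} (q_i^{(t)})^T)\right] = 0
\]
is the trivial one, i.e., linear independence of exactly the vectors listed in the proposition. This completes the equivalence. The main technical point is the identification of the orthogonal complement of the lineality space in terms of the kernel basis $\{q_i^{(t)}\}$; once that is in place, duality does the rest and the equivalence is essentially formal.
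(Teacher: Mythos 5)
Your proof is correct. Note that the paper itself gives no proof of Proposition \ref{equindc}: it is quoted as a known result from the cited references of Shapiro and Sun, so there is no in-paper argument to compare against. Your duality argument is the standard route to this equivalence and is complete: passing from $V+W=\mathcal{Y}$ to $V^{\perp}\cap W^{\perp}=\{0\}$, identifying $V^{\perp}$ via the adjoints $\mathcal{J}h(u)^{T}$ and $\nabla G_t(u)^{*}$, and identifying the orthogonal complement of $\operatorname{lin}\bigl(T_{\mathcal{S}_+^{m_t}}(G_t(u))\bigr)=\ker\bigl(N\mapsto E_t^{T}NE_t\bigr)$ as the range of the adjoint map $Y\mapsto E_tYE_t^{T}$, spanned by the symmetrized products $\tfrac12\bigl(q_i^{(t)}(q_j^{(t)})^{T}+q_j^{(t)}(q_i^{(t)})^{T}\bigr)$. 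Two small points are worth making explicit: the first block of $W$ is $\{0\}$, so its complement is all of $\mathbb{R}^{\ell}$ and the $\lambda_j$ are unconstrained (you use this correctly but silently), and the injectivity of $Y\mapsto E_tYE_t^{T}$ (from $E_t$ having full column rank) is what guarantees both that the listed matrices form a basis and that the expansion of $Z_t$ is unique, so that vanishing of the combination forces all coefficients to vanish. With those observations your equivalence with the stated linear-independence condition is airtight, and the argument is also basis-independent since $\operatorname{lin}\bigl(T_{\mathcal{S}_+^{m_t}}(G_t(u))\bigr)$ depends only on the kernel of $G_t(u)$ as a subspace.
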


		The NDC is generally viewed as an analogue of the classical
	linear independence constraint qualification condition 
	in nonlinear programming, as it ensures the uniqueness of
	the Lagrange multipliers. Under the NDC, we can derive the first  order
	optimality conditions for  \reff{nsdp}.
 
	\begin{thm}[\cite{shap,sunde}] \label{KKT:cond}
		Suppose $u$ is a local minimizer of \reff{nsdp},
		and the  NDC \reff{CQ} holds at $u$.
		Then, there exist unique matrices $\Lambda_1\in \mathcal{S}^{m_1},\dots, \Lambda_s\in \mathcal{S}^{m_s}$,  and scalars $\mu_1,\dots,\mu_{\ell} $ such that
		\be \label{kkt}
		\begin{array}{r}
			\nabla f(u)-\sum\limits_{i=1}^{\ell} \mu_i \nabla h_{i}(u)-\sum\limits_{t=1}^{s}\nabla G_t(u)^*[\Lambda_t] = 0, \\
			 \Lambda_t \succeq 0, \,\,
			\tr{\Lambda_tG_t(u)}=0\,\,(t=1,\dots,s).
		\end{array}
		\ee
	\end{thm}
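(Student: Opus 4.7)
The plan is to derive the KKT conditions in two stages: first establish existence of multipliers via the geometric first-order necessary condition, with NDC playing the role of a constraint qualification, and then invoke Proposition \ref{equindc} for uniqueness. The key classical fact underlying the existence step is the explicit form of the normal cone to $\mathcal{S}_+^m$ at a point $Y \succeq 0$:
\[
N_{\mathcal{S}_+^m}(Y) = \{\Lambda \in \mathcal{S}^m : \Lambda \preceq 0,\, \tr{\Lambda Y} = 0\}.
\]

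For existence, since $u$ is a local minimizer, the variational necessary condition $-\nabla f(u) \in N_F(u)$ holds, where $F$ denotes the feasible set of \reff{nsdp}. Bundle the constraints as $\Phi(x) := (h(x), G_1(x), \ldots, G_s(x))$ with target cone $C := \{0\} \times \mathcal{S}_+^{m_1} \times \cdots \times \mathcal{S}_+^{m_s}$. The NDC \reff{CQ} asserts that the image of $\nabla \Phi(u)$ together with the lineality spaces of the tangent cones to $C$ at $\Phi(u)$ covers the ambient space; this is Robinson's CQ strengthened by a lineality requirement. Under Robinson's CQ, the linearized and actual tangent cones to $F$ at $u$ agree, and polar-taking commutes with the preimage under $\nabla \Phi(u)$, giving
\[
N_F(u) \, = \, \mathcal{J}h(u)^T \mathbb{R}^\ell + \sum_{t=1}^s \nabla G_t(u)^*\bigl(N_{\mathcal{S}_+^{m_t}}(G_t(u))\bigr).
\]
Substituting the explicit description of $N_{\mathcal{S}_+^{m_t}}(G_t(u))$ into $-\nabla f(u) \in N_F(u)$ and flipping the sign convention for the conic multipliers yields scalars $\mu_i$ and matrices $\Lambda_t \succeq 0$ with $\tr{\Lambda_t G_t(u)} = 0$ satisfying the gradient equation in \reff{kkt}.

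For uniqueness, suppose $(\mu_i, \Lambda_t)$ and $(\mu_i', \Lambda_t')$ both satisfy \reff{kkt}, and set $\delta\mu_i := \mu_i - \mu_i'$, $\delta\Lambda_t := \Lambda_t - \Lambda_t'$. Subtracting the gradient equations gives
\[
\sum_{i=1}^{\ell} \delta\mu_i \, \nabla h_i(u) + \sum_{t=1}^s \nabla G_t(u)^*[\delta\Lambda_t] = 0.
\]
The complementarity $\tr{\Lambda_t G_t(u)} = 0$, combined with $\Lambda_t, G_t(u) \succeq 0$, forces $\Lambda_t G_t(u) = 0$ by a standard psd-trace argument; hence the range of $\Lambda_t$ lies in $\ker G_t(u)$. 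Expanding in the basis $\{q_k^{(t)}\}$ of $\ker G_t(u)$ shows that both $\Lambda_t$ and $\Lambda_t'$, and therefore $\delta\Lambda_t$, admit a representation
\[
\delta\Lambda_t \, = \, \sum_{1 \le i \le j \le m_t - r_t} c_{ij}^{(t)} \, \frac{q_i^{(t)}(q_j^{(t)})^T + q_j^{(t)}(q_i^{(t)})^T}{2}.
\]
Plugging this expansion into the subtracted gradient identity produces exactly a linear combination of the vectors listed in Proposition \ref{equindc} that sums to zero; linear independence of those vectors (guaranteed by NDC) then forces every $\delta\mu_i$ and every coefficient $c_{ij}^{(t)}$ to vanish, giving uniqueness.

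The main obstacle is the existence step: one has to verify carefully that NDC genuinely implies Robinson's CQ for the conic constraint $\Phi(x) \in C$, and that polar-taking distributes correctly over the sum of cones, so that the abstract inclusion $-\nabla f(u) \in N_F(u)$ can be unpacked into the explicit multiplier equation. Both facts are standard in the sensitivity theory of nonlinear SDP \cite{shap,shuni,sunde}, and once they are in place the uniqueness half reduces to a straightforward linear-algebra exercise centered on $\ker G_t(u)$.
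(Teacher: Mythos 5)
The paper does not prove Theorem \ref{KKT:cond} at all --- it is quoted from \cite{shap,sunde} --- so there is no internal argument to compare against; what you have written is a reconstruction of the standard proof from those references, and it is sound in outline. The existence half is the classical route: NDC \reff{CQ} implies Robinson's constraint qualification (the lineality space sits inside the tangent cone), under which the normal cone to the feasible set decomposes as $N_F(u)=\mathcal{J}h(u)^T\mathbb{R}^{\ell}+\sum_{t=1}^s \nabla G_t(u)^*\bigl(N_{\mathcal{S}_{+}^{m_t}}(G_t(u))\bigr)$, and the explicit description $N_{\mathcal{S}_{+}^{m}}(Y)=\{\Lambda\preceq 0:\tr{\Lambda Y}=0\}$ turns $-\nabla f(u)\in N_F(u)$ into \reff{kkt}. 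Be aware that this decomposition identity is itself a nontrivial theorem requiring Robinson's CQ, so your existence step is really a citation rather than a proof; that is acceptable here since the paper treats the entire statement the same way. The uniqueness half is the part where NDC is needed beyond Robinson's CQ, and your argument is correct and essentially self-contained: complementarity plus $\Lambda_t\succeq 0$, $G_t(u)\succeq 0$ forces $\Lambda_t G_t(u)=0$, hence the range of each $\Lambda_t$ (and of $\delta\Lambda_t$) lies in $\ker G_t(u)$, so $\delta\Lambda_t$ expands over the symmetrized products $\tfrac{1}{2}\bigl(q_i^{(t)}(q_j^{(t)})^{T}+q_j^{(t)}(q_i^{(t)})^{T}\bigr)$; by linearity of $\nabla G_t(u)^*$ the difference of the two gradient equations becomes a vanishing linear combination of exactly the vectors listed in Proposition \ref{equindc}, whose independence under NDC annihilates every $\delta\mu_i$ and every coefficient $c_{ij}^{(t)}$ (and since the symmetrized products are themselves linearly independent in $\mathcal{S}^{m_t}$, this gives $\delta\Lambda_t=0$). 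This mirrors, in reverse, the computation the paper itself performs in the proof of Proposition \ref{ndchold}.
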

	
\vspace{.3cm}
			
\noindent  Since the matrices $\Lambda_t, G_t(u)$ are symmetric and positive semidefinite, the equation $\tr{\Lambda_tG_t(u)}=0$ implies that $G_t(u)\Lambda_t=\Lambda_t G_t(u)=0$ for $t=1,\dots,s$. A feasible point 
$u$ of \reff{nsdp} that satisfies the  conditions \reff{kkt} is called a critical point.

\subsection{Some basics in real algebraic geometry}
\label{sec:qm}

In this subsection, we review some basics in  real algebraic geometry.
For more detailed introductions, we refer  to \cite{bcr,HDLJ,kism,Las01,Lau09,niebook,sk09}.

A subset $I  \subseteq \mathbb{R}[x]$ is called an ideal of $\re[x]$
if   $I \cdot \mathbb{R}[x] \subseteq I$, $I+I \subseteq I$. 
The ideal generated by a polynomial matrix $T=(T_{ij}(x))\in \mR[x]^{r_1\times r_2}$ is defined to be the ideal generated by all its entries,  denoted as
\begin{equation*}
	\ideal{T}: \,= \, \sum\limits_{i=1}^{r_1}\sum\limits_{j=1}^{r_2}T_{ij}(x)\cdot \mR[x].
\end{equation*}
For a degree $k$, the $k$th degree truncation of $\ideal{T}$ is
\be \nonumber
\ideal{T}_k \,= \, \sum\limits_{i=1}^{r_1}\sum\limits_{j=1}^{r_2}T_{ij}(x)\cdot \mR[x]_{k-\deg(T_{ij})}.
\ee
For a   polynomial matrix tuple $H:=(H_1,\dots, H_m)$,
 the ideal generated  by $H$ is 
\begin{equation*}
	\ideal{H} := \, \ideal{H_1}+\cdots+\ideal{H_m},
\end{equation*}
and its $k$th degree truncation is
\[
\ideal{H}_k=\ideal{H_1}_k+\cdots+\ideal{H_m}_k.
\]

A polynomial $p$ is said to be a sum of squares (SOS) if
$
p=p_1^2+\dots+p_t^2$
for some $p_1,\dots,p_t \in \mathbb{R}[x]$.
The set of all SOS polynomials in $x$ is denoted by $\Sigma[x]$.
For a  degree $k$, denote the truncation
$
\Sigma[x]_{k} \, \coloneqq  \, \Sigma[x] \cap  \mathbb{R}[x]_{k} .
$
The quadratic module  generated
by  an $m\times m$ symmetric polynomial matrix $G(x)$ is the set
\be \nonumber
\qmod{G}  \coloneqq  \Big \{
\sigma+\sum_{t=1}^r v_t^TGv_t \mid
\sigma \in \Sigma[x],~v_t \in \mathbb{R}[x]^m,~r\in \N
\Big \}.
\ee
For a polynomial matrix tuple $\mathcal{G}=(G_1,\dots,G_{s})$,
the  quadratic module generated by $\mathcal{G}$ is
\be
\qmod{\mathcal{G}} \,  \coloneqq  \,  \qmod{G_1}+\cdots+ \qmod{G_s}.
\ee
Similarly, the $k$th degree truncation of $\qmod{G}$ is
\be \nonumber
\qmod{G}_{k}:=\left\{\baray{l|l}
\sigma+\sum_{t=1}^{r} v_t^TGv_t& \baray{l}  \sigma \in \Sigma[x],~v_t \in \mathbb{R}[x]^m,~r\in \N,\\
\deg(\sigma)\leq k,~\deg(v_t^TGv_t)\leq k
\earay
\earay \right\},
\ee
and the $k$th degree truncation of $\qmod{\mathcal{G}}$ is
\[
\qmod{\mathcal{G}}_{k} \,  \coloneqq  \,  \qmod{G_1}_{k}+\cdots+ \qmod{G_s}_{k},
\]
The set $\ideal{H}+\qmod{\mathcal{G}}$ is said to be Archimedean, if  there exists
$R >0$ such that $R-\|x\|^2 \in \ideal{H}+\qmod{\mathcal{G}}$.

For a truncated multi-sequence (tms) $z=(z_{\alpha})_{\alpha \in \mathbb{N}_d^n}$, it induces the Riesz functional acting on $\mathbb{R}[x]_{ d}$ as
\begin{equation} \label{Reiz:fun}
	\langle \sum_{\alpha \in \mathbb{N}_d^n} p_{\alpha} x^{\alpha},z\rangle
	\,  \coloneqq  \, \sum_{\alpha \in \mathbb{N}_d^n} p_{\alpha} z_{\alpha}.
\end{equation}
For a polynomial $q\in \mR[x]_{2k}$,
the $k$th order {\em localizing matrix} of   $q$ with respect to $z$
is the symmetric matrix $L_{q}^{(k)}[z]$ such that
\be \label{locmat:gi}
L_{q}^{(k)}[z] \, = \, \langle q\cdot [x]_{t}[x]_{t}^T, z\rangle,
\ee
where $t = k-\lceil \deg(q)/2 \rceil$,
and the functional $\langle \cdot, z\rangle$ is applied entrywise to  $q\cdot [x]_{t}[x]_{t}^T$.
When $q = 1$,  $L_{q}^{(k)}[z]$
becomes the $k$th order {\it moment matrix}
$
M_k[z]\,\coloneqq\, L_1^{(k)}[z].
$
For a polynomial matrix $T\in \mR[x]_{2k}^{r_1\times r_2}$,  the $k$th order localizing  matrix of $T$ for 
$z$ is  the block matrix
\[
L_{T}^{(k)}[z] \, \coloneqq \,
( L_{ T_{ij} }^{(k)}[z] )_{1 \le i\leq r_1, 1\le j \le r_2}.
\]
For instance, when $r_1=r_2=2$ and
\[
G(x)= \left[\begin{array}{ll}
	1-x_1x_2 &\quad x_1+x_2\\
	x_1-x_2&\quad x_1^2-x_2^2\\
\end{array}\right],
\]
we have
\[
L_{G}^{(2)}[z] =
\left[\begin{array}{lll}
	L_{1-x_1x_2}^{(2)}[z]  &\quad  L_{x_1 + x_2}^{(2)}[z]  \\
	L_{x_1 - x_2}^{(2)}[z] &\quad   L_{x_1^2-x_2^2}^{(2)}[z]
\end{array}\right],
\]
where
\[
L_{1-x_1x_2}^{(2)}[z] =
\left[\begin{array}{lll}
	z_{00} - z_{11}  &\quad  z_{10} - z_{21} &\quad z_{01} - z_{12}  \\
	z_{10} - z_{21}  &\quad  z_{20} - z_{31} &\quad z_{11} - z_{22}  \\
	z_{01} - z_{12}  &\quad  z_{11} - z_{22} &\quad z_{02} - z_{13}
\end{array}\right],
\]
\[
L_{x_1+x_2}^{(2)}[z] =
\left[\begin{array}{lll}
	z_{10} + z_{01}  &\quad  z_{20} + z_{11}  &\quad  z_{11} + z_{02}  \\
	z_{20} + z_{11}  &\quad  z_{30} + z_{21}  &\quad  z_{21} + z_{12}  \\
	z_{11} + z_{02}  &\quad  z_{21} + z_{12}  &\quad  z_{12} + z_{03}
\end{array}\right],
\]
\[
L_{x_1-x_2}^{(2)}[z] =
\left[\begin{array}{lll}
	z_{10} - z_{01}  &\quad  z_{20} - z_{11}  &\quad  z_{11} - z_{02}  \\
	z_{20} - z_{11}  &\quad  z_{30} - z_{21}  &\quad  z_{21} - z_{12}  \\
	z_{11} - z_{02}  &\quad  z_{21} - z_{12}  &\quad  z_{12} - z_{03}
\end{array}\right],
\]
\[
L_{x_1^2-x_2^2}^{(2)}[z] =
\left[\begin{array}{lll}
	z_{20} - z_{02}  &\quad  z_{30} - z_{12}  &\quad  z_{21} - z_{03}  \\
	z_{30} - z_{12}  &\quad  z_{40} - z_{22}  &\quad  z_{31} - z_{13}  \\
	z_{21} - z_{03}  &\quad  z_{31} - z_{13}  &\quad  z_{22} - z_{04}
\end{array}\right] .
\]
Note that when $r_1=r_2$ and $T$ is symmetric, the  localizing matrix $L_{T}^{(k)}[z]$ is also symmetric.

\subsection{The matrix Moment-SOS relaxations} \label{matsos}
The matrix Moment-SOS relaxations,  proposed in \cite{hdlb,schhol}, is an efficient method to solve polynomial matrix optimization globally. 
Consider the  problem \reff{nsdp}, 
where $f,h_1,\dots, h_{\ell}\in \mR[x]$,
 and each $G_t$ is an $m_t\times m_t$ symmetric  polynomial matrix.  Denote the  tuples $h=(h_1,\dots, h_{\ell})$ and $\mathcal{G}=(G_1,\dots,G_{s})$.
For an order $k>0$, the $k$th order SOS relaxation of \reff{nsdp} is
\be \label{sos:loc}
\left\{\begin{array}{cl}
	\max & \gamma \\
	\text { s.t. } & f-\gamma \in \ideal{h}_{2k}+ \qmod{\mathcal{G}}_{ 2k}.
\end{array}\right.
\ee
	The dual optimization of \reff{sos:loc} is the $k$th order moment relaxation
\be  \label{mom:loc}
\left\{ \baray{cl}
\min &  \langle f, z \rangle  \\
\st  &   L_{h_i}^{(k)}[z]=0~(i=1,\dots,\ell), \\
&  L_{G_t}^{(k)}[z] \succeq 0~(t=1,\dots,s),\\
& \langle 1, z \rangle   =  1,  \,  M_k[z] \succeq 0,\, z \in \mathbb{R}^{\mathbb{N}_{2k}^{n} } .
\earay \right.
\ee
The above relaxations can be formulated as   semidefinite programs \cite{hdlb,schhol}.  For $k=1,2, \ldots$, the sequence of relaxations \reff{sos:loc}-\reff{mom:loc}
is referred to as the matrix Moment-SOS hierarchy.

Let  $v_{\min},\,v_{k,sos}, \,v_{k,mom}$ be the optimal values of \reff{nsdp}, \reff{sos:loc} and \reff{mom:loc}, respectively. 
Note that if $f-\gamma \in \ideal{h}_{2k}+\mathrm{QM}[\mathcal{G}]_{ 2k}$, then $f(x)-\gamma\geq 0$ for all feasible $x$. This implies the monotonicity relation:
$
\cdots \leq v_{k,sos}\leq v_{k+1,sos}\leq \cdots\leq v_{\min}.
$
Similarly, if $x$ is a feasible point of \reff{nsdp}, then $[x]_{2k}$ is feasible for \reff{mom:loc}. Hence, it  holds that
$
\cdots \leq v_{k,mom}\leq v_{k+1,mom}\leq \cdots\leq v_{\min}.
$
The hierarchy is said to  have {\it finite convergence} if   $v_{k,sos}=v_{k,mom}=v_{\min}$ for all $k$ big enough.
We refer to  \cite{hdlb,hl24,hn24,schhol} for the convergence theory of this hierarchy.

In computational practice,  the flat truncation  condition is typically used to  detect  finite convergence  of the hierarchy \reff{sos:loc}--\reff{mom:loc} \cite{CF05,hdlb,HDLJ,Lau09,niebook}. Let
\[
d_K=\max\{\lceil \frac{\deg(f)}{2}\rceil,\lceil \frac{\deg(h_1)}{2}\rceil,\dots,\lceil \frac{\deg(h_{\ell})}{2}\rceil,\frac{\deg(G_{1})}{2}\rceil,\dots,\lceil \frac{\deg(G_s)}{2}\rceil\}
\]
We say the flat truncation holds at  a minimizer $z^*$ of \reff{mom:loc} if there exists 
$d_K \leq t  \leq k$ such that
\begin{equation}  	\label{flat1.3}
	\operatorname{rank} M_{t-d_{K}}[z^*]
	\, = \,  \operatorname{rank} M_{t}[z^*].
\end{equation}
When  this condition is satisfied,  we  have
$ v_{k,mom} =v_{\min}$, and minimizers of \eqref{nsdp} can be extracted.

\section{A strengthened  reformulation and  Moment-SOS relaxations} \label{sec:refor}

In this section, we study the first order optimality conditions of  \reff{int:copo}. By expressing the Lagrange multipliers in terms of the variables, we give a strengthened reformulation of \reff{int:copo}. We show that   the matrix Moment-SOS hierarchy for solving this   reformulation  has finite convergence, which is crucial for the development of our algorithm.
This technique is motivated by the recent work \cite{hnlme} and can be generalized to construct tight relaxations for general polynomial matrix optimization  of the form \reff{nsdp}, extending \cite{hnlme} where   only inequality constraints are considered.

Denote the variable vector
\[
x:=(x_{11},x_{12},\dots,x_{1n},x_{22},x_{23},\dots,x_{n-1,n},x_{nn}),
\]
and let  $X(x)$
be the $n$-by-$n$ symmetric  matrix whose $(i,j)$-th entry is $x_{ij}$ for $i\leq j$. 
 Consider the optimization problem
 \be  \label{copo}
 \left\{ \baray{rl}
 \min & f(x)  \\
 \st &\tr{X(x)}=1,\\
 & 	X(x) \succeq 0, \\
 \earay \right.
 \ee
 where $f(x)$ is a homogeneous polynomial of degree $d$.
Denote 
\[
h(x):=\tr{X(x)}-1, 
\]
and  let $f_{\min}$ be the optimal value of $\reff{copo}$. Then,  $f(x)$ is $\mathcal{S}^n_+$-copositive if and only if  $f_{\min}\geq 0$.

\subsection{A strengthened   reformulation}
We construct a strengthened reformulation for  \reff{copo} by using the optimality conditions. First, we show that the nondegeneracy condition  holds at every feasible point of  \reff{copo}.

\begin{prop}\label{ndchold}
	Let $u$ be a feasible point of  \reff{copo} with $\rank~ X(u)=r$. Then, the NDC \reff{CQ} holds at  $u$.
\end{prop}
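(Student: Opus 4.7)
The plan is to invoke Proposition \ref{equindc}. The only PSD constraint in \reff{copo} is $G(x) := X(x) \succeq 0$, so, fixing an arbitrary basis $\{q_1,\dots,q_{n-r}\}$ of $\ker X(u)$, I need to verify that $\nabla h(u)$ together with the $\binom{n-r+1}{2}$ vectors $\nabla G(u)^{\ast}\!\bigl[\tfrac{q_i q_j^T + q_j q_i^T}{2}\bigr]$ for $1 \le i \le j \le n-r$ form a linearly independent subset of $\mathbb{R}^{\sigma(n)}$.

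First I would compute the adjoint explicitly. Since $\partial X/\partial x_{ii} = E_{ii}$ and $\partial X/\partial x_{ij} = E_{ij} + E_{ji}$ for $i < j$, for any $M \in \mathcal{S}^n$ the vector $\nabla G(u)^{\ast}[M] \in \mathbb{R}^{\sigma(n)}$ has coordinate $M_{ii}$ at position $x_{ii}$ and coordinate $2M_{ij}$ at position $x_{ij}$ ($i<j$). Thus $\nabla G(u)^{\ast}$ is a linear bijection $\phi\colon \mathcal{S}^n \to \mathbb{R}^{\sigma(n)}$ (both spaces have dimension $\sigma(n)$, and $\phi(M)=0$ forces $M=0$). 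Since $h(x) = \sum_{i} x_{ii} - 1$, we also have $\nabla h(u) = \phi(I_n)$.

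Applying $\phi^{-1}$, the required linear independence in $\mathbb{R}^{\sigma(n)}$ is equivalent to the linear independence in $\mathcal{S}^n$ of $I_n$ together with $\{q_i q_j^T + q_j q_i^T : 1 \le i \le j \le n-r\}$. Let $Q \in \mathbb{R}^{n \times (n-r)}$ have columns $q_1,\dots,q_{n-r}$. Every real linear combination of the symmetric products $q_i q_j^T + q_j q_i^T$ has the form $Q N Q^T$ for some $N \in \mathcal{S}^{n-r}$, so its rank is at most $n-r$, and these products are themselves linearly independent because $N \mapsto Q N Q^T$ is injective when $Q$ has full column rank. Finally, feasibility gives $\tr{X(u)} = 1$, so $X(u) \ne 0$, hence $r \ge 1$ and $n - r \le n - 1$. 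Since $I_n$ has rank $n$, it cannot be written as $Q N Q^T$, ruling out any nontrivial relation that involves $I_n$ and completing the independence check.

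The proof is mostly bookkeeping once the abstract adjoint $\nabla G(u)^{\ast}$ is identified with the explicit isomorphism $\phi$. The one genuine ingredient is the trace normalization $\tr{X(u)} = 1$: without it, $X(u)$ could be $0$, in which case $\ker X(u) = \mathbb{R}^n$, the span of the products $q_i q_j^T + q_j q_i^T$ would fill all of $\mathcal{S}^n$ and contain $I_n$, and NDC would fail. Hence the only subtle point is observing that feasibility of $u$ automatically excludes this degenerate case.
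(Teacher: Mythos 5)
Your proof is correct, and it reaches the conclusion by a route that differs from the paper's in its decisive step. Both arguments begin the same way: you and the paper both compute the adjoint explicitly (coordinate $M_{ii}$ at position $x_{ii}$, coordinate $2M_{ij}$ at position $x_{ij}$) and both observe that the whole question reduces to a statement about the symmetric matrices $I_n$ and $q_iq_j^T+q_jq_i^T$. From there the paper works with the kernel relation: it shows the dependence relation forces the combination matrix $M=\sum\lambda_{ij}\frac{q_iq_j^T+q_jq_i^T}{2}$ to equal $-\mu I_n$, then uses $X(u)M=0$ (since the $q_i$ lie in $\ker X(u)$) together with $\tr{X(u)}=1$ to force $\mu=0$, and finally extracts each $\lambda_{ij}$ by pairing $M=0$ against an \emph{orthonormal} basis. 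You instead settle the independence by a rank count: every combination of the symmetric products is $QNQ^T$ with $\rank \le n-r\le n-1<n=\rank(I_n)$, and the congruence map $N\mapsto QNQ^T$ is injective for full-column-rank $Q$. Your version is arguably cleaner and slightly more general --- it never uses that the $q_i$ span the kernel of $X(u)$, only that there are at most $n-1$ of them (equivalently $X(u)\neq 0$), and it works for an arbitrary rather than orthonormal basis, consistent with the ``arbitrary basis'' phrasing of Proposition \ref{equindc}. Both proofs isolate the same essential hypothesis, namely that $\tr{X(u)}=1$ rules out $X(u)=0$; your closing remark about why NDC would fail at $u=0$ is a nice sanity check that the paper does not make explicit.
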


\begin{proof}
 Let  $\{q_1,\dots,q_{n-r}\}$ be an orthonormal  basis of the kernel of  $X(u)$.	In view of  Proposition \ref{equindc}, it suffices to show that  the vectors 
 \[
 \nabla h(u),\quad \nabla X(u)^*[\frac{q_i q_j^{T}+q_j q_i^{T}}{2}]~(1\leq i\leq j \leq n-r)
 \]
  are linearly independent.  Suppose there exist  real scalars $\mu$, $\lambda_{\ij}$  such that
	\be \nonumber
	\mu \nabla h(u)+ \sum\limits_{1\leq i\leq j \leq n-r}\lambda_{ij}	\nabla X(u)^*[\frac{q_i q_j^{T}+q_j q_i^{T}}{2}]=0.
	\ee
	   Denote the matrix
	\[
	M=\sum\limits_{1\leq i\leq j \leq n-r}\lambda_{ij}\cdot \frac{q_i q_j^{T}+q_j q_i^{T}}{2}.
	\]
The equation above is equivalent to 
\[
\mu \nabla h(u)+ \nabla X(u)^*[M]=0.
\]
Then, we have that  $\mu+M_{ss}=0$ for $s=1,\dots,n$,  $~M_{st}=0$ for $s\neq t$.
	Since the vectors $q_1,\dots,q_{n-r}$ belong to the kernel of $X(u)$, it holds that 
	\[
	\baray{ll}
	X(u)\cdot  M&=X(u)\cdot (\sum\limits_{1\leq i\leq j \leq n-r}\lambda_{ij}\cdot \frac{q_i q_j^{T}+q_j q_i^{T}}{2})\\
	&=\sum\limits_{1\leq i\leq j \leq n-r}\lambda_{ij}\cdot \frac{X(u)q_i q_j^{T}+X(u)q_j q_i^{T}}{2}=0,
	\earay
	\]
which implies
\[
0=(X(u)\cdot  M)_{ss}=u_{ss}M_{ss}=-\mu u_{ss}~(s=1,\dots,n).
\]
Since $u_{11}+\cdots+u_{nn}=1$, there exists at least one  $1\leq k_0\leq n$ such that $u_{k_0k_0}\neq 0$. Thus, we have $\mu=0$ and  $M=0$. For any $1 \leq i_0 < j_0 \leq n-r$, we get 
\[
\baray{ll}
0=q_{i_0}^TMq_{j_0}&=q_{i_0}^T(\sum\limits_{1 \leq i \leq j \leq n-r} \lambda_{ij}\cdot \frac{q_i q_j^{T}+q_j q_i^{T}}{2})q_{j_0}\\
&=\sum\limits_{1 \leq i \leq j \leq n-r} \lambda_{ij}\cdot \frac{q_{i_0}^Tq_i q_j^{T}q_{j_0}+q_{i_0}^Tq_j q_i^{T}q_{j_0}}{2}=\frac{1}{2}\lambda_{i_0j_0}.\\
\earay 
\]
Similarly, we have $0=q_{i_0}^TMq_{i_0}=\lambda_{i_0i_0}$ for $1\leq i_0\leq n-r$.
These imply that $\lambda_{ij}=0$ for all $1\leq i\leq j \leq n-r$, which completes the proof.
\end{proof}

Let $u$ be a local minimizer of \reff{copo}. By Proposition \ref{ndchold} and Theorem \ref{KKT:cond},  there exist $\mu\in \mR$ and $\Lambda \in \mathcal{S}_{+}^n$  such that $\tr{\Lambda X(u)}=0$ and
\be \label{kktnco}
 \nabla	f(u)-\mu \nabla h(u)-\nabla X(u)^*[\Lambda] = 0.
\ee
By Euler's identity for homogeneous polynomials, we have  $x^T\nabla	f(u)=d\cdot f(x)$.
Note that 
\[
x^T \nabla h(u)=\tr{X(u)}=1,~x^T(\nabla X(u)^*[\Lambda])=\tr{\Lambda X(u)}=0.
\]
Multiplying   both sides of   \reff{kktnco} by $x^T$, we  obtain 
\[
0=x^T(\nabla	f(u)-\mu \nabla h(u)-\nabla X(u)^* [\Lambda])=d\cdot f(u)-\mu,
\]
which implies that $\mu=d\cdot f(u)$.
The equation  \reff{kktnco} is equivalent to 
\[
\partial_{x_{ii}}f(u)-\mu-\Lambda_{ii}=0~(i=1,\dots,n),
\]
\[
\partial_{x_{ij}}f(u)-2\Lambda_{ij}=0~(1\leq i<j\leq n).
\]
Then, we have

\[
\Lambda_{ii}=\partial_{x_{ii}}f(u)-d\cdot f(u)~(i=1,\dots,n),
\]
\[
\Lambda_{ij}=\frac{1}{2}\partial_{x_{ij}}f(u)~(1\leq i<j\leq n).
\]
Let $\Theta(x)$ be the $n$-by-$n$ symmetric polynomial matrix with entries given as above, i.e., 
{\small
\be \label{exp:theta1}
\Theta(x)= \left[\begin{matrix}
	\partial_{x_{11}}f(x)-d\cdot f(x) &\frac{1}{2}\partial_{x_{12}}f(x)&\cdots &\frac{1}{2}\partial_{x_{1n}}f(x)\\
	\frac{1}{2}\partial_{x_{12}}f(x)&\partial_{x_{22}}f(x)-d\cdot f(x)&\cdots &\frac{1}{2}\partial_{x_{2n}}f(x)\\
\vdots 	&\vdots &\ddots &\vdots \\
\frac{1}{2}\partial_{x_{1n}}f(x)&\frac{1}{2}\partial_{x_{2n}}f(x)&\cdots& \partial_{x_{nn}}f(x)-d\cdot f(x)
\end{matrix}\right].
\ee
}

Since $X(u)\succeq 0$, $\Theta(u) \succeq 0$, and $\tr{\Theta(u)X(u)}=0$, we have  $X(u)\Theta(u)=0$. 
In the following, we give an estimate on the Frobenius norm of  $X(u)$. Let $\lambda_1,\dots,\lambda_n$ be the eigenvalues of $X(u)$. Since $X(u)\succeq 0$ and $\tr{X(u)}=1$, it holds that
\[
\lambda_1\geq 0,\dots,\lambda_n \geq 0,~ \lambda_1+\cdots+\lambda_n=1.
\]
Then, we have that $0\leq \lambda_i \leq 1$ and 
\[
\|X(u)\|^2_F=\lambda_1^2+\cdots+\lambda_n^2\leq \lambda_1+\cdots+\lambda_n=1,
\]
\[
\|X(u)\|^2_F=\frac{1}{n}\cdot (1^2+\cdots+1^2)(\lambda_1^2+\cdots+\lambda_n^2)\geq \frac{1}{n}\cdot (\lambda_1+\cdots+\lambda_n)^2=\frac{1}{n},
\]
where the last inequality follows from Cauchy's inequality.

Since the feasible set of \reff{copo} is compact, its optimal value  is achievable. Hence,  \reff{copo} is equivalent to the following strengthened reformulation:
\be  \label{copo:equ}
\left\{ \baray{rl}
\min & f(x)  \\
\st &  \tr{X(x)}=1,~X(x)\Theta(x)=0, \\
   &    X(x)\succeq 0,~\Theta(x)\succeq 0,\\
   &  1\geq \|X(x)\|_F^2,~\|X(x)\|_F^2\geq  \frac{1}{n}.\\
\earay \right.
\ee
Clearly, the optimal value of \reff{copo:equ} is equal to $f_{\min}$.

\subsection{The matrix Moment--SOS hierarchy of \reff{copo:equ}}
Let $d_{0}:=\lceil \frac{d+1}{2} \rceil$. 
We apply the matrix Moment-SOS relaxations  to solve \reff{copo:equ} (see Section \ref{matsos}). For an order $k\geq  d_0$, the $k$th order SOS relaxation of \reff{copo:equ} is 

\be  \label{sos}
\left\{ \baray{rll}
\max & \gamma &\\
\st &f-\gamma \in &\ideal{\tr{X}-1,X\Theta}_{2k}\\
& &+\qmod{X,\Theta,1-\|X\|_F^2,\|X\|_F^2-\frac{1}{n}}_{2k}.
\earay \right.
\ee
The dual problem of \reff{sos} is the $k$th order moment relaxation:
\be  \label{mom}
\left\{ \baray{rl}
\min & \langle f, z\rangle  \\
\st & L^{(k)}_{\tr{X}-1}(z)=0,~L^{(k)}_{X \Theta}(z)=0,\\
&  L_{X}^{(k)}(z) \succeq 0,~L_{\Theta}^{(k)}(z) \succeq 0,\\
&L_{1-\|X\|_F^2}^{(k)}(z) \succeq 0,~L_{\|X\|_F^2-\frac{1}{n}}^{(k)}(z) \succeq 0,\\
&\langle 1, z\rangle=1,~M_k(z) \succeq 0,~ z \in \mathbb{R}^{\mathbb{N}_{2k}^{\sigma(n)} } .
\earay \right.
\ee
Let  $f_{k,sos}$ and $f_{k,mom}$ denote the optimal values of  \reff{sos} and \reff{mom}, respectively. 

Denote the ideal
\be \label{equI}
I:=\ideal{h,~X\Theta}.
\ee
The set of complex critical points of \reff{copo} is given by
\be \label{kktcri}
V_{\mC}(I):=\left\{
	x \in \mathbb{C}^{\sigma(n)} :
	h(x)=0,~X(x)\Theta(x)=0 \\
	\right\}.
\ee
We show that $f$  attains a constant real value on each irreducible subvariety of $V_{\mC}(I)$ that contains at least one real point.

\begin{lemma}\label{kktfini}                    Let $W$ be an  irreducible subvariety of $V_{\mC}(I)$ with $W\cap \mR^{\sigma(n)} \neq\emptyset$. Then, $f$  attains a constant real value on $W$.
\end{lemma}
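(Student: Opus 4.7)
The plan is to produce a polynomial identity that exhibits every point of $V_{\mathbb{C}}(I)$ as a formal KKT point, then exploit the reality of $u_0 \in W \cap \mathbb{R}^{\sigma(n)}$ together with the Zariski tangent conditions at $u_0$ to show that $f$ has vanishing differential along $W$ at $u_0$. Irreducibility of $W$ will then propagate local constancy to global constancy, and the real value will follow automatically because $u_0$ is real while $f \in \mathbb{R}[x]$.

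Concretely, I would start from the identity
\[
\nabla f(x) \;=\; d\cdot f(x)\,\nabla h(x) \;+\; \nabla X(x)^*[\Theta(x)],
\]
which holds for every $x \in \mathbb{C}^{\sigma(n)}$ and is just a rewriting of the definition \reff{exp:theta1} of $\Theta$: the $x_{ii}$-component gives $\partial_{x_{ii}} f = d\cdot f + \Theta_{ii}$, and the $x_{ij}$-component ($i<j$) gives $\partial_{x_{ij}} f = 2\Theta_{ij}$. Fix $u_0 \in W \cap \mathbb{R}^{\sigma(n)}$. Since $X(u_0)$ is real symmetric, I can orthogonally diagonalize it: there is a real orthogonal $P$ with $P^T X(u_0) P = \mathrm{diag}(D_1, 0)$, where $D_1$ is invertible of size $r := \rank X(u_0)$; the equation $X(u_0)\Theta(u_0)=0$ then forces $P^T \Theta(u_0) P = \mathrm{diag}(0, E)$ for some symmetric $E$ of size $n-r$. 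For any $v$ in the Zariski tangent space $T_{u_0} W \subseteq T_{u_0} V_{\mathbb{C}}(I)$, the linearization of the matrix constraint $X\Theta = 0$ gives $X(v)\Theta(u_0) + X(u_0)\Theta'(u_0)[v] = 0$; reading this blockwise in the basis furnished by $P$, the $(2,2)$-block reads $(P^T X(v) P)_{22}\, E = 0$, and therefore
\[
\tr{X(v)\Theta(u_0)} \;=\; \tr{(P^T X(v) P)_{22}\, E} \;=\; 0.
\]
Combined with $v^T \nabla h(u_0) = 0$ (the linearization of $h=0$), the identity above then yields $v^T \nabla f(u_0) = 0$.

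With $v^T \nabla f(u_0) = 0$ for every $v \in T_{u_0} W$ at a smooth point $u_0$ of $W$, the polynomial $f - f(u_0)$ has vanishing differential on $T_{u_0}W$ at $u_0$, hence vanishes on a Zariski-open neighborhood of $u_0$ in $W$; irreducibility then upgrades this to $f \equiv f(u_0)$ on all of $W$, and this common value is real since $u_0$ is real and $f \in \mathbb{R}[x]$. The main obstacle I anticipate is guaranteeing a smooth real point of $W$ at which to run the local-to-global step. When $W$ is closed under complex conjugation (the typical case since $I$ is generated by real polynomials), the smooth real points of $W$ are Zariski dense and such $u_0$ exists; otherwise $W \cap \overline{W}$ is a proper subvariety of $W$ containing every real point of $W$, and one must either restrict attention to conjugation-invariant irreducible subvarieties or push the tangent computation through at (possibly singular) real points—the block-diagonalization argument itself works verbatim at any real $u_0$, so only the transfer from the tangent-space vanishing to global constancy on $W$ needs extra care.
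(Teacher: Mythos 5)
Your identity $\nabla f(x) = d\cdot f(x)\,\nabla h(x) + \nabla X(x)^*[\Theta(x)]$, valid for \emph{all} $x$, and the block computation showing $\tr{X(v)\Theta(u_0)}=0$ for $v$ in the Zariski tangent space at a real point $u_0$, are both correct. But the proof breaks at the local-to-global step: from $v^T\nabla f(u_0)=0$ for all $v\in T_{u_0}W$ at a \emph{single} point you cannot conclude that $f-f(u_0)$ vanishes on a Zariski neighborhood of $u_0$ in $W$. A function on an irreducible variety can have vanishing differential at one point without being locally constant there (take $W=\mathbb{C}$ and $f=t^2$ at $t=0$). To conclude constancy you would need the differential of $f|_W$ to vanish on a Zariski-dense subset of $W$; the hypothesis only guarantees $W\cap\mathbb{R}^{\sigma(n)}\neq\emptyset$, i.e.\ possibly a single real point, and your tangent computation is genuinely confined to real points because the orthogonal diagonalization $P^TX(u_0)P=\mathrm{diag}(D_1,0)$ uses that $X(u_0)$ is \emph{real} symmetric — a complex symmetric matrix need not be diagonalizable at all (e.g.\ $\left[\begin{smallmatrix}1&i\\ i&-1\end{smallmatrix}\right]$ is nilpotent), so the argument does not transfer to the generic (complex) points of $W$ where density could be invoked.

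This is exactly the difficulty the paper's proof is built to circumvent: it connects two arbitrary points of $W$ by a piecewise-smooth path (using path-connectedness of irreducible complex varieties in the strong topology), stratifies the path by $\operatorname{rank} X(x(t))$, and on each stratum uses a Schur-complement factorization — which only needs an invertible leading block, not diagonalizability — to show $\tr{X(x(t))\,\tfrac{d\Theta(x(t))}{dt}}=0$, hence $\tfrac{d}{dt}f(x(t))=0$ along the whole path; integration then gives constancy. To repair your argument you would either have to reproduce this rank-stratified computation at complex points of $W$ (establishing $df|_{T_xW}=0$ on a dense open subset) or adopt the path argument; the conjugation-invariance caveat you raise at the end is a real issue but is secondary to this gap.
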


\begin{proof}
   Denote the Lagrangian function
   $$
   \mathcal{L}(x)=f(x)-d\cdot f(x)h(x)-\tr{\Theta(x)X(x)}. 
   $$
   Note that for $x\in V_{\mC}(I)$, we have $\mathcal{L}(x)=f(x)$.
    Let  $x^{(1)}$ and $x^{(2)}$ be two arbitrary distinct points in $W$. We show that $f(x^{(1)})=f(x^{(2)})$. 
   
   Since  $W$ is  path-connected  in the strong topology on $\mathbb{C}^{\sigma(n)}$ (\cite[4.1.3]{sawc}), there exists a piecewise-smooth curve  $x(t)$ $(0 \leq t \leq 1)$ such that
   \[
   \{x(t):~~0 \leq t \leq 1\}\subseteq W,~x(0)=x^{(1)},~ x(1)=x^{(2)},
   \] 
   and  a partition $0 =t_0 < \cdots < t_s= 1$ such that $x(t)$ is smooth on each interval $(t_i, t_{i+1})$ for $i=0,\dots,s-1$.
   For $r=n,n-1,\dots,0$,  define the set $T_r$ recursively as 
   \[
   T_r=\{t\in (t_i, t_{i+1})\mid \rank\, X(x(t)) =r\}\backslash \cup_{j=r+1}^n cl(T_j).
   \] 	
      Let $\bar{t}$ be an arbitrary point in $T_r$.
      
       For $r=n$,  the matrix $X(x(\bar{t}))$ is invertible, and there exists a sufficiently small $\epsilon>0$ such that $G(x(t))$ remains invertible for all $t\in (\bar{t}-\epsilon,\bar{t}+\epsilon)$.  The equation $X(x(t))\Theta(x(t))=0$ implies that $\Theta(x(t))$ is identically zero for all $t\in (\bar{t}-\epsilon,\bar{t}+\epsilon)$. Then, we have 
   \[
   \baray{ll}
   \frac{\mathrm{d} f}{\mathrm{d}t}(x(\bar{t}))&=\frac{\mathrm{d} \mathcal{L}}{\mathrm{d}t}(x(\bar{t}))\\
   &=(\nabla f(x(\bar{t}))-d\cdot f(x(\bar{t}))\nabla h(x(\bar{t}))-\nabla X(x(\bar{t}))^*[\Theta(x(\bar{t}))])^T\nabla x(\bar{t})\\
   &\quad -d\cdot h(x(\bar{t}))\nabla f(x(\bar{t}))^T\nabla x(\bar{t})-\tr{ X(x(\bar{t}))\frac{d\Theta(x(\bar{t}))}{dt}} \\
   &=0,\\
   \earay
   \] 
where the last equality follows from the expression of $\Theta(x)$  in \reff{exp:theta1}.
Hence, the gradient of $f(x(t))$ at $\bar{t}$ is $0$.
   Since $\bar{t}$ is arbitrary in  $S_n$, it follows that $f(x(t))$ has  zero gradient for all $t\in cl(T_n)\backslash \{t_i,t_{i+1}\}$.

   For $r<n$,  we have $ X(x(\bar{t}))=r$. Up to a suitable permutation of the rows and columns of $G(x)$, we can assume that
   \[
   X(x)=\left[\begin{array}{cc}
   	G_1(x)& G_2(x) \\
   	G_2(x)^T &G_3(x)\\
   \end{array}\right],~ 
   \]
   where $G_1(x(\bar{t}))$ is an $r$-by-$r$ invertible complex symmetric matrix. Let
   \[
   Q(x)=\left[\begin{array}{cc}
   	I_{r}& -G_1(x)^{-1}G_2(x) \\
   	0 &I_{n-{r}}\\
   \end{array}\right],
   \]
 \[
   S(x)=G_3(x)-G_2(x)^TG_1(x)^{-1}G_2(x).
   \]
 One can see that
   \[
   Q(x)^TX(x)Q(x)=\left[\begin{array}{cc}
   	G_1(x)& 0 \\
   	0 &S(x)\\
   \end{array}\right].
   \]
   Since $\bar{t}\notin  \cup_{j=r+1}^n cl(T_j)$, there exists $\epsilon>0$ such that  $\rank X(x(\bar{t}))$ is maximal in the interval $ (\bar{t}-\epsilon,\bar{t}+\epsilon)$; that is,
     \[
     \rank\, X(x(t))=\rank\, G_1(x(t))=r,\,\,\ \forall \, t\in (\bar{t}-\epsilon,\bar{t}+\epsilon).
     \] 
    Hence, we have $S(x(t))=0$ for all $t\in (\bar{t}-\epsilon,\bar{t}+\epsilon)$. 
  We write that
   \[
   Q(x(t))^{-1}\Theta(x(t)) Q(x(t))^{-T}=\left[\begin{array}{cc}
   	\Lambda_1(t) & \Lambda_2(t) \\
   	\Lambda_2(t)^T & \Lambda_3(t)\\
   \end{array}\right],
   \]
   where $\Lambda(t)$ is an $r$-by-$r$ symmetric polynomial matrix in $t$.
   Then, it holds that
   \[
   \baray{ll}	
   0=X(x(t))\Theta(x(t))&=Q(x(t))^TX(x(t))Q(x(t))Q(x(t))^{-1}\Theta(x(t)) Q(x(t))^{-T} \\
   &=\left[\begin{array}{cc}
   	G_1(x(t))\Lambda_1(t) & G_1(x(t))\Lambda_2(t) \\
   	S(x(t))\Lambda_2(t)^T & S(x(t))\Lambda_3(t)\\
   \end{array}\right],\\
   \earay
   \]
which implies
  \[
  \Lambda_1(t)=\Lambda_2(t)=0,\,\, \forall t\in (\bar{t}-\epsilon,\bar{t}+\epsilon).
  \]
    For convenience, we  write   $G_i(x(t))$ as $G_i$. Then, we get
   \[
   \baray{ll}
   \Theta(x(t)) &=Q(x(t))\left[\begin{array}{cc}
   	0 & 0 \\
   	0 & \Lambda_3(t)\\
   \end{array}\right]Q(x(t))^{T}\\
   &=\left[\begin{array}{cc}
   	G_1^{-1}G_2\Lambda_3(t)G_2^TG_1^{-T} & -G_1^{-1}G_2\Lambda_3(t) \\
   	-(G_1^{-1}G_2\Lambda_3(t))^T & \Lambda_3(t)\\
   \end{array}\right].\\
   \earay
   \]	
Following the ideal as in \cite[Lemma 4.2 ]{hnlme}, we have 
   $$
   	\tr{X(x(t))\frac{d\Theta(x(t))}{dt}} =0.
   $$
   This implies  that
   \[
   \baray{ll}
   \frac{\mathrm{d} f}{\mathrm{d}t}(x(\bar{t}))&=\frac{d\mathcal{L}}{dt}(x(\bar{t}))\\
   &=(\nabla f(x(\bar{t}))-d\cdot f(x(\bar{t}))\nabla h(x(\bar{t}))-\nabla X(x(\bar{t}))^* [\Theta(x(\bar{t}))])^T\nabla x(\bar{t})\\
   &\quad \quad -d\cdot h(x(\bar{t}))\nabla f(x(\bar{t}))^T\nabla x(\bar{t})-\tr{X(x(\bar{t}))\frac{d\Theta(x(\bar{t}))}{dt}} \\
   &=0\\
   \earay	
   \]
   Since $\bar{t}$ is arbitrary in  $T_{r}$,  we know that  $f(x(t))$ has  zero gradient
   for all $t\in cl(T_{r})\backslash \{t_i,t_{i+1}\}$.

   Note that
   $
   (t_i,t_{i+1})=\cup_{r=0}^n cl(T_{r})\backslash \{t_i, t_{i+1}\}
   $. 
 We know that   $f(x(t))$ has zero gradient on  $(t_i,t_{i+1})$. Hence,   by integration, it holds that 
  \[
  f(x(t_{i+1})) - f(x(t_i)) =
  \int_{t_i}^{ t_{i+1} } \frac{\mathrm{d} f(x(t))}{\mathrm{d}t}  dt  = 0.
  \]
   Since it holds for all intervals $(t_i,t_{i+1})$, we have
  \[
  f(x^{(1)})=f(x(t_0))=f(x(t_1))=\cdots=f(x(t_s))=f(x^{(2)}),
  \]
 which implies that $f$  attains a constant  value on $W$. Since $W$ contains at least one real point, this value must be real.

\end{proof}

In the following,  we prove that  the strengthened hierarchy  \reff{sos}--\reff{mom}   has finite convergence.

\begin{thm}\label{thm:fin}
 Suppose  $f(x)$ is a  homogeneous polynomial of degree $d$. Then, we have:
 
 \bit
 
 \item[(i)] The moment relaxation \reff{mom} is feasible and  its optimal value is attainable.

 \item[(ii)] There exists an order $k_0>0$ such that $f_{k,sos}= f_{k,mom}=f_{\min}$ for all $k\geq k_{0}$.
 
 \eit 
\end{thm}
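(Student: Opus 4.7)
The plan is to prove (i) by a direct construction of a feasible moment sequence together with compactness forced by an Archimedean bound, and to prove (ii) by combining Lemma \ref{kktfini} with a degree-bounded Positivstellensatz certificate along the lines of \cite{hnlme}.

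For (i), the original problem \reff{copo} has a nonempty compact feasible set (since $\tr{X(x)} = 1$ together with $X(x) \succeq 0$ bounds all entries of $x$), so it attains a global minimizer $u^*$. By Proposition \ref{ndchold} and Theorem \ref{KKT:cond}, there is a unique multiplier matrix $\Lambda \succeq 0$ with $\Lambda X(u^*) = 0$, and the calculation preceding \reff{copo:equ} identifies $\Lambda = \Theta(u^*)$; the norm bounds $\tfrac{1}{n} \leq \|X(u^*)\|_F^2 \leq 1$ are automatic. Hence $u^*$ is feasible for \reff{copo:equ}, and the truncated sequence $z^*_\alpha := (u^*)^\alpha$ ($|\alpha| \leq 2k$) is feasible for \reff{mom}. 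For attainability, I would verify that the quadratic module in \reff{sos} is Archimedean: since
\[
\|x\|^2 = \sum_i x_{ii}^2 + \sum_{i<j} x_{ij}^2 \;\leq\; \sum_i x_{ii}^2 + 2\sum_{i<j} x_{ij}^2 = \|X(x)\|_F^2,
\]
we have $1 - \|x\|^2 = (1 - \|X(x)\|_F^2) + \sum_{i<j} x_{ij}^2 \in \qmod{1-\|X\|_F^2} + \Sigma[x]$. A standard consequence of Archimedeanness is that $M_k[z]$ is uniformly bounded for every $z$ feasible in \reff{mom} with $\langle 1, z\rangle = 1$, so the feasible set of \reff{mom} is compact and the linear functional $z \mapsto \langle f, z\rangle$ attains its minimum.

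For (ii), I would proceed as follows. The feasible set of \reff{copo:equ} is contained in $V_\mC(I) \cap \mR^{\sigma(n)}$ with $I$ as in \reff{equI}. Since $V_\mC(I)$ has only finitely many irreducible components and Lemma \ref{kktfini} shows that $f$ is a real constant on each component meeting $\mR^{\sigma(n)}$, the values of $f$ on the real critical set form a finite list $c_1 < \cdots < c_N$, and $f_{\min}$ is the smallest value attained at a feasible point. The goal is to produce, at some finite order $k_0$, a certificate
\[
f - f_{\min} \;\in\; \ideal{h, X\Theta}_{2k_0} + \qmod{X, \Theta, 1-\|X\|_F^2, \|X\|_F^2-\tfrac{1}{n}}_{2k_0},
\]
which yields $f_{k_0, sos} \geq f_{\min}$. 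Following the template of \cite{hnlme}: since $f - c_i$ vanishes identically on each component of $V_\mC(I)$ where $f = c_i$, an interpolating polynomial $p$ that is strictly positive on the feasible locus of the $f_{\min}$-components and vanishes on the higher-value components makes $(f - f_{\min}) + \varepsilon p$ strictly positive on the feasible set of \reff{copo:equ}. The matrix Putinar Positivstellensatz of \cite{hdlb, schhol} combined with the Archimedean bound from (i) then supplies a bounded-degree SOS-plus-ideal representation for each small $\varepsilon$; a limit argument as $\varepsilon \to 0^+$, together with the ideal membership of suitable powers of the factors of $p$, removes the perturbation. Combined with the weak-duality chain $f_{k,sos} \leq f_{k,mom} \leq f_{\min}$ and monotonicity of both sequences in $k$ from Section \ref{matsos}, this gives $f_{k,sos} = f_{k,mom} = f_{\min}$ for every $k \geq k_0$.

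The main obstacle will be executing the Positivstellensatz step in the matrix-constraint setting. The ideal $\ideal{h, X\Theta}$ mixes a scalar equation with the $n^2$ entries of the matrix equation $X\Theta = 0$, and its primary decomposition interacts nontrivially with the matrix quadratic module $\qmod{X, \Theta, \ldots}$. The argument in \cite{hnlme} is developed for scalar inequality constraints only; lifting it to the present setting requires a careful componentwise analysis ensuring that the SOS multipliers produced by the Positivstellensatz have degrees uniformly bounded in $\varepsilon$, and that the $\varepsilon \to 0^+$ limit lands inside the quadratic module rather than merely in its closure.
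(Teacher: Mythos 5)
Your part (i) is correct and is essentially the paper's own argument: the same construction of a feasible point via Proposition \ref{ndchold}, and the same identity $1-\|x\|^2=(1-\|X(x)\|_F^2)+\sum_{i<j}x_{ij}^2$ to force compactness of the feasible set of \reff{mom} and hence attainability.

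Part (ii) has a genuine gap at exactly the step you flag as ``the main obstacle'': the claim that the Putinar representation of $(f-f_{\min})+\varepsilon p$ has degree \emph{uniformly bounded in $\varepsilon$}. Putinar's Positivstellensatz provides no such uniformity; as $\varepsilon\to 0^+$ the positivity margin on the $f_{\min}$-components shrinks to zero and the certificate degree is in general unbounded. Without a single truncation order $2k$ valid for all $\varepsilon>0$ simultaneously you cannot conclude $f_{k,sos}\geq f_{\min}-\varepsilon$ for every $\varepsilon$ at one fixed $k$, which is precisely what finite convergence requires; and the fallback limit $\varepsilon\to 0^+$ only places $f-f_{\min}$ in the closure of the truncated set $\ideal{h,X\Theta}_{2k}+\qmod{X,\Theta,\dots}_{2k}$, which need not be closed. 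The paper fills this hole by \emph{not} applying Putinar on the critical component at all: it takes a primary decomposition $I=I_0\cap\cdots\cap I_\ell$ adapted to the values of $f$, uses Hilbert's Nullstellensatz to get $f^{\eta}\in I_{\ell_0}$ on the $f_{\min}$-component, and writes the explicit square $\sigma_{\ell_0}^{\epsilon}=\big(\sqrt{\epsilon}\sum_{j=0}^{\eta-1}\binom{1/2}{j}\epsilon^{-j}f^{j}\big)^2$ (a truncated binomial series for $\sqrt{f+\epsilon}$), so that $f+\epsilon-\sigma_{\ell_0}^{\epsilon}$ is a combination of the \emph{fixed} polynomials $f^{\eta},\dots,f^{2\eta-2}\in I_{\ell_0}$ with only scalar coefficients depending on $\epsilon$. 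Confining all $\epsilon$-dependence to coefficients of fixed-degree polynomials is what produces a single $k_0$ working for every $\epsilon>0$. Putinar is invoked only for the components with value $v_i<f_{\min}$, where the relevant positive polynomial $1+\phi$ is $\epsilon$-independent, and the Real Nullstellensatz handles the components with no real points --- two cases your interpolating polynomial $p$ does not account for. Finally, the patching across components uses the partition-of-unity polynomials $a_0,\dots,a_\ell$ of \cite{niejac} rather than a single interpolant. Until the uniform-degree issue is resolved along these (or equivalent) lines, your proof of (ii) establishes only asymptotic, not finite, convergence.
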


\begin{proof}
	(i) Note that  the feasible set of \reff{copo} is compact, so it  has at least  one minimizer $x^*$. Since  the NDC \reff{CQ} holds at $x^*$  (see Proposition \ref{ndchold}),  $x^*$ is   feasible  for \reff{copo:equ} and $[x^*]_{2k}$  is   feasible  for  \reff{mom}. 
	Let  $z$ be a feasible point  of  \reff{mom}. Note that $L_{1-\|X\|_F^2}^{(k)}(z)\succeq 0$, $M_k(z) \succeq 0$, and  
	\[
	1-\|x\|^2 = 1-\|X(x)\|_F^2+\sum\limits_{1\leq i<j\leq n} x_{ij}^2.
	\]
	We have $L_{1-\|x\|^2}^{(k)}(z)\succeq 0$, and 
	\[
    \langle \|x\|^{2k}, z\rangle \leq  \langle \|x\|^{2(k-1)}, z\rangle \leq \cdots \leq  \langle 1, z\rangle=1.
	\]
	It implies that
	\[
	\|M_k(z)\|_{F}^2\leq (\tr{M_k(z)})^2 \leq \sum\limits_{i=0}^{k} \langle \|x\|^{2i}, z\rangle \leq 2(k+1).
	\]
Hence, the feasible set of \reff{mom}  is  compact and  the optimal value of \reff{mom}  is attainable.

	(ii) Without loss of generality, we  assume   $f_{\min}=0$, up to shifting $f$ by a constant. 
	Let $V_{\mC}(I):=W_1 \cup \cdots \cup W_s$ be an irreducible decomposition of  $V_{\mC}(I)$. By  Lemma \ref{kktfini},  $f(x)$ attains a constant real value on  $W_i$ if  $W_i \cap \mR^{\sigma(n)} \neq\emptyset$. Then, we know that  $f(x)$ attains finitely many distinct real
	values on $V_{\mC}(I)$,  ordered as $v_1<v_2<\cdots<v_{\ell}$. 
	Let $\mathcal{K}_0$ be the union of all  subvarieties $W_i$ such that $W_i\cap \mathbb{R}^{\sigma(n)}=\emptyset$, and let $\mathcal{K}_i$ be the union of all remaining $W_i$ on which $f$ attains the constant value $v_i$. 
	Then,  $f$ is identically equal to  $v_i$ on $\mathcal{K}_i$ for $i=1, \ldots,\ell$, and the  complex varieties $\mathcal{K}_0, \mathcal{K}_1, \ldots, \mathcal{K}_{\ell}$ satisfy
	\[
	V_{\mC}(I)=\mathcal{K}_0 \cup \mathcal{K}_1 \cup \cdots \cup \mathcal{K}_{\ell},~~~~
	\mathcal{K}_0 \cap \mathbb{R}^n=\emptyset.
	\]   
	By the primary decomposition of the ideal $I$ \cite{bcr},    there exist ideals $I_0,I_1,\dots,I_{\ell} \subseteq \mR[x]$ such that 
	\[
	I=I_0 \cap I_1 \cap \ldots \cap I_{\ell},\quad
	\mathcal{K}_i=V_{\mathbb{C}}(I_i)~~(i=0, \ldots, \ell).
	\]
	Assume that $v_{\ell_0}=f_{\min}$ for some  $1\leq \ell_0\leq \ell$.
	
	 Note that $V_{\mathbb{C}}\left(I_0\right) \cap \mathbb{R}^n=\emptyset$. It follows from \cite[Corollary 4.1.8]{bcr} that there exists $\tau_0 \in \Sigma[x]$ such that $1+\tau_0 \in I_0$. Let 
	\[
	\sigma_0:=\frac{1}{4}(f+1)^2+\frac{\tau_0}{4}(f-1)^2.
	\]
	Then, we have that $\sigma_0\in \Sigma[x]$ and 
	\[
	f-\sigma_0=\frac{1}{4}(f+1)^2-\frac{1}{4}(f-1)^2-\sigma_0=-\frac{1+\tau_0}{4}(f-1)^2\in I_0.
	\]
	
	For $i=1,\dots,\ell_0-1$, we have $v_i<f_{\min}=0$.  This implies that the real points in $V_{\mathbb{C}}(I_i)$ are not feasible for \reff{copo}, i.e.,
	\[
	\{x\in \mathbb{R}^{\sigma(n)}: p(x)=0,\,\,\forall p\in I_i\}\cap \{x\in \mR^{\sigma(n)}:  X(x)\succeq 0\}=\emptyset.
	\]
	By \cite[Proposition 9]{sk09},  there exist polynomials $d_1,\dots,d_t\in \qmod{X}$ such that 
	\[
	\{x\in \mR^{\sigma(n)}:X(x) \succeq 0\}=\{x\in \mR^{\sigma(n)}:d_1(x)\geq 0,\dots,d_t(x)\geq 0\}.
	\]
Then, it follows from  \cite[Corollary 4.4.3]{blo} that there exists 
$
\phi=\sum_{\alpha\in\{0,1\}^t}\phi_{\alpha}d_1^{\alpha_1}\cdots d_t^{\alpha_t}$ with $ \phi_{\alpha}\in \Sigma[x]
$ 
 such that $2+\phi\in I_i$. 
	Note that $1+\phi(x)>0$ on the feasible set of \reff{copo} and 
 the quadratic module  $\qmod{X,\Theta,1-\|X\|_F^2,\|X\|_F^2-\frac{1}{n}}$   is Archimedean. By Putinar's Positivstellensatz \cite{putinar1993positive}, we have
\[
1+\phi\in \qmod{X,\Theta,1-\|X\|_F^2,\|X\|_F^2-\frac{1}{n}}.
\] 
 Let
	\[
	\sigma_i=\frac{1}{4}(f+1)^2+\frac{1+\phi}{4}(f-1)^2.
	\]
	Then, we know that 
	\[
	\sigma_i \in \qmod{X,\Theta,1-\|X\|_F^2,\|X\|_F^2-\frac{1}{n}},
	\]
	\[
	f-\sigma_i=\frac{1}{4}(f+1)^2-\frac{1}{4}(f-1)^2-\sigma_i=-\frac{2+\phi}{4}(f-1)^2\in I_i.
	\]
	
	For $i=\ell_0$,  we know  that $f$ is identically  equal to $0$ on $V_{\mathbb{C}}(I_{\ell_0}).$
	By Hilbert's Strong Nullstellensatz \cite{blo}, there exists an integer $\eta>0$ such that $f^{\eta} \in I_{\ell_0}$. For $\epsilon>0$, let
	\[
	s_{\ell_0}^{\epsilon}=\sqrt{\epsilon} \sum_{j=0}^{\eta-1}\binom{\frac{1}{2}}{j} \epsilon^{-j} f^j,~~~ \sigma_{\ell_0}^{\epsilon}=s_{\ell_0}^2.
	\]
Then, for any $\epsilon >0$, we have that
	\be \label{epl0}
	f+\epsilon-\sigma_{\ell_0}^{\epsilon}=\sum_{j=0}^{\eta-2} b_j^{\epsilon} f^{\eta+j}\in I_{\ell_0},
	\ee
	where  $b_j^{\epsilon}$ are real scalars depending  on $\epsilon$.

	For $i=\ell_0+1, \ldots, \ell$, we know that $v_i>f_{\min}=0$ and
	$v_i^{-1} f-1$ is identically  zero on $V_{\mathbb{C}}(I_i).$
It follows from Hilbert's Strong Nullstellensatz \cite{blo} that there exists $\eta_i \in \mathbb{N}$ such that $ \left(v_i^{-1} f-1\right)^{\eta_i} \in I_i$. Let
	$$
	s_i=\sqrt{v_i} \sum_{j=0}^{\eta_i-1}\binom{\frac{1}{2}}{j} \left(v_i^{-1} f-1\right)^j,~ \sigma_i=s_i^2.
	$$
Similarly to  \reff{epl0}, we have 
	$	f-\sigma_i\in I_i$.

Note that  the complex varieties $\mathcal{K}_0, \mathcal{K}_1, \ldots, \mathcal{K}_\ell$ are disjoint.  By \cite[Lemma 3.3]{niejac}, there exist  $a_0, \ldots, a_{\ell} \in \mathbb{R}[x]$ such that
	\[
	a_0^2+\cdots+a_{\ell}^2-1 \in I,\quad 
	a_i \in \bigcap_{i\neq j \in \{0,\dots,\ell\}} I_{j}~~(i=0,\dots,\ell).
	\]
		For $\epsilon>0$, denote 
	\[
	\sigma_{\epsilon}=\sigma_{\ell_0}^{\epsilon}a_{\ell_0}^2+\sum_{\ell_0 \neq i \in\{0, \ldots, \ell\}}\left(\sigma_i+\epsilon\right) a_i^2.
	\]
	Then, we have 
	$$
	\begin{aligned}
		f+\epsilon-\sigma_\epsilon= & (f+\epsilon)\left(1-a_0^2-\cdots-a_{\ell}^2\right) \\
		& +\left(f+\epsilon-\sigma_{\ell_0}^{\epsilon}\right) a_{\ell_0}^2+\sum_{\ell_0 \neq i \in\{0, \ldots, \ell\}}\left(f-\sigma_i\right) a_i^2 .
	\end{aligned}
	$$
Since  $f-\sigma_i \in I_i$ for each $i \neq \ell_0$,  there exists $k_1>0$ such that 
\[
	(f+\epsilon)\left(1-a_{0}^2-\cdots-a_r^2\right) \in I_{2k_1},\quad (f-\sigma_i) a_i^2 \in I_{2k_1}.
\] 
 Multiplying    both sides of \reff{epl0} by $a_{\ell_0}^2$, we obtain 
	\[
	(f+\epsilon-\sigma_{\ell_0}^{\epsilon})a_{\ell_0}^2=\sum_{j=0}^{\eta-2} b_j^{\epsilon} f^{\eta+j}a_{\ell_0}^2.
	\]
Hence, there exists $k_2>0$ such that  $(f+\epsilon-\sigma_{\ell_0}^{\epsilon})a_{\ell_0}^2\in I_{2k_2}$ for all $\epsilon>0$. From the expression of $\sigma_{\epsilon}$, there exists $k_3>0$ such that \[
\sigma_{\epsilon} \in I_{2k_3} + \qmod{X,\Theta,1-\|X\|_F^2,\|X\|_F^2-\frac{1}{n}}_{2k_3}.
\]
Then, for $k\geq \max\{k_1,k_2,k_3\}$, 	we have
	\[
	f-f_{\min}+\epsilon \in I_{2k}+ \qmod{X,\Theta,1-\|X\|_F^2,\|X\|_F^2-\frac{1}{n}}_{2k}.
	\]
This implies that when $k$ is sufficiently large, we have $f_{k,sos} \geq f_{\min}-\epsilon$ for all $\epsilon>0$. Since $f_{k,sos} \leq f_{\min}$ for all $k$, it follows that  $f_{k,sos}=f_{k,mom}=f_{\min}$ for all $k$ sufficiently large.

\end{proof}

\section{An algorithm for testing $\mathcal{S}^n_{+}$-copositivity}
\label{sec:alo}

In this section, we present our algorithm for testing $\mathcal{S}^n_{+}$-copositivity. We show that the algorithm terminates in  finitely many iterations for any homogeneous polynomial $f$, either  providing a  certificate that $f$ is $\mathcal{S}_+^n$-copositive, or returning  a refutation $0\neq u\in \mR^{\sigma(n)}$ satisfying $X(u)\succeq 0$ such that $f(u)<0$.

\vspace{.2cm}

The algorithm is given below.

\begin{alg}
	\label{ag:CopoSDP} \rm
	Testing $\mathcal{S}_+^n$-copositivity of  homogeneous polynomials. 
	
	\begin{description}
			\item [Input]  A homogeneous polynomial $f(x)$ of degree $d$.
		\item [Step~0]
	 Choose a generic vector $\xi \in \mathbb{R}^{\mathbb{N}_d^{\sigma(n)}}$. Let $k:=\lceil \frac{d+1}{2}\rceil$.
		
		\item [Step~1]
	 Solve the semidefinite relaxation pair \reff{sos}--\reff{mom}. If the  optimal value $f_{k,mom} \geq 0$, output that $f$ is $\mathcal{S}_+^n$-copositive  and stop; otherwise, go to Step 2.
		
		\item[Step~2]
	Solve the semidefinite program
	
\be  \label{momran}
\left\{ \baray{rl}
\min & \langle \xi^T[x]_d, w\rangle  \\
\st & L^{(k)}_{\tr{X}-1}(w)=0,\\
&  L_{X}^{(k)}(w) \succeq 0,~L_{f_{k,mom}-f}^{(k)}(w) \succeq 0,\\
&L_{1-\|X\|_F^2}^{(k)}(w) \succeq 0,~L_{\|X\|_F^2-\frac{1}{n}}^{(k)}(w) \succeq 0,\\
&\langle 1, w\rangle=1,~M_k(w) \succeq 0,~ w \in \mathbb{R}^{\mathbb{N}_{2k}^{\sigma(n)} } .
\earay \right.
\ee
If \reff{momran} is feasible, compute an optimizer $w^*$ and go to Step 3;  otherwise, let $k:=k+1$ and go to Step 1.

\item[Step~3] Let $u=(w^*_{e_1}, \dots,w^*_{e_{\sigma(n)}})$. If $f(u)<0$, output that $f(x)$ is not $\mathcal{S}_+^n$-copositive, return $u$ and  stop;  otherwise, let $k:=k+1$ and go to Step 1.

\item [Output] A certificate that $f$ is $\mathcal{S}_+^n$-copositive   or a refutation $0\neq u\in \mR^{\sigma(n)}$  such that $X(u)\succeq 0$ and $f(u)<0$.

	\end{description}
\end{alg}

\vspace{.3cm}

\begin{remark}\label{rem1}

We make the following remarks about  Algorithm \ref{ag:CopoSDP}:
\bit

\item[(i)]	In Step 0,   the vector $\xi \in \mathbb{R}^{\mathbb{N}_d^{\sigma(n)}}$ is  said to be generic  if it lies in the input space excluding a subset of measure zero. In numerical experiments, we can choose $\xi$ as  a random vector whose entries are independently sampled from the standard normal distribution.

\item[(ii)]   Note that  $f_{k,mom}$ is the optimal value of the semidefinite relaxation \reff{mom}.  When solving it numerically, rounding errors may occur. Therefore, we treat $f_{k,mom} \geq 0$ if $f_{k,mom}\geq -10^{-5}$.

\item[(iii)]   We cannot test copositivity simply by solving the hierarchy \reff{sos}--\reff{mom}, because  if $f$ is not $\mathcal{S}_+^n$-copositive, we may not be able to certify that $f_{k,mom}=f_{\min}<0$ at some relaxation order $k$, even if finite convergence occurs. This is because the flat truncation condition \reff{flat1.3} may not be satisfied. In numerical practice, if the condition \reff{flat1.3} holds for the minimizer of  \reff{mom}, we can also detect $\mathcal{S}_+^n$-copositivity and  terminate the iteration, as the minimizers of \reff{copo} can  be extracted (see Section~\ref{matsos}).


\eit
 
\end{remark}


In the following, we show that Algorithm \ref{ag:CopoSDP}  always terminates in finitely many
iterations, i.e., testing $\mathcal{S}_+^n$-copositivity  can be done by solving a finite number of semidefinite programs.

\begin{thm}\label{finitethm}
Suppose $f(x)$ is a homogeneous polynomial of degree $d$. Then,	Algorithm \ref{ag:CopoSDP} terminates in finitely many iterations. To be more specific, we have:
	
	\bit 
	
	\item[(i)] If $f(x)$ is $\mathcal{S}^n_+$-copositive,  we have $f_{k,mom}\geq 0$ for all sufficiently large $k$. Consequently, Algorithm \ref{ag:CopoSDP} outputs that $f$ is $\mathcal{S}^n_+$-copositive.

	\item[(ii)] If $f(x)$ is not $\mathcal{S}^n_+$-copositive, 	Algorithm \ref{ag:CopoSDP}  returns a
	nonzero vector $ u\in \mR^{\sigma(n)}$ satisfying $X(u)\succeq 0$ such that $f(u)<0$.

	\eit 
\end{thm}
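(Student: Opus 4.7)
\medskip
\noindent\textbf{Plan.} The engine of the proof is Theorem~\ref{thm:fin}, which supplies a threshold $k_0$ such that $f_{k,mom}=f_{\min}$ for all $k\ge k_0$. Part~(i) follows immediately: when $f$ is $\mathcal{S}^n_+$-copositive we have $f_{\min}\ge 0$, so $f_{k,mom}\ge 0$ for every $k\ge k_0$, and Step~1 of Algorithm~\ref{ag:CopoSDP} terminates with the correct certificate.

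\medskip
\noindent For part~(ii), assume $f_{\min}<0$. For every $k\ge k_0$ the Step~1 test fails, so the algorithm enters Step~2. I would first verify that \reff{momran} is feasible at every such $k$: pick any minimizer $x^*$ of \reff{copo}, which exists by compactness of its feasible set. Then $w:=[x^*]_{2k}$ satisfies all constraints of \reff{momran} because $x^*$ is feasible for \reff{copo}, the Frobenius-norm bounds $1/n\le\|X(x^*)\|_F^2\le 1$ derived in Section~\ref{sec:refor} hold, and $f(x^*)=f_{\min}=f_{k,mom}$ makes the localizing matrix $L_{f_{k,mom}-f}^{(k)}[w]$ identically zero.

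\medskip
\noindent Next let $w^*$ be any optimizer of \reff{momran} and put $u=(w^*_{e_1},\dots,w^*_{e_{\sigma(n)}})$. The matrix $X(u)$ appears as a principal submatrix of $L_X^{(k)}[w^*]$, obtained by selecting the $(1,1)$-entry of each block $L_{X_{ij}}^{(k)}[w^*]$, so $X(u)\succeq 0$ is automatic. It then remains to prove $f(u)<0$ for some sufficiently large $k$. Since $f\ge f_{\min}$ on the feasible set of \reff{copo} while the moment constraints force $f\le f_{k,mom}=f_{\min}$, any representing measure of $w^*$ must be supported on the compact set $M^*$ of global minimizers of \reff{copo}. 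Genericity of $\xi$ ensures that $\xi^T[x]_d$ attains its minimum on $M^*$ at a unique point $x^{**}$, necessarily with $f(x^{**})=f_{\min}<0$. The archimedean property of the quadratic module in Section~\ref{sec:refor} together with the standard asymptotic convergence of Lasserre hierarchies then imply that $w^*$ converges entrywise to the truncated moments of $\delta_{x^{**}}$; in particular $u\to x^{**}$ and, by continuity of $f$, $f(u)<0$ for all $k$ large enough. At such $k$ the algorithm exits in Step~3 with the refutation~$u$ (which is nonzero since $f(0)=0$).

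\medskip
\noindent\textbf{Main obstacle.} The delicate step is justifying the convergence $u\to x^{**}$. It combines (a) the standard genericity argument producing a unique minimizer of $\xi^T[x]_d$ over the compact real algebraic set $M^*$, and (b) convergence of truncated moment sequences under the Archimedean condition, which guarantees that the optimal $w^*$ of \reff{momran} is asymptotically the truncated moment sequence of the Dirac measure at $x^{**}$. Once $u$ is close enough to $x^{**}$, the strict inequality $f(u)<0$ follows from $f(x^{**})=f_{\min}<0$ and continuity, so the outer loop of Algorithm~\ref{ag:CopoSDP} is guaranteed to exit after finitely many iterations.
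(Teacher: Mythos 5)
Your proposal is correct and follows essentially the same route as the paper's proof: part (i) directly from the finite convergence established in Theorem~\ref{thm:fin}, and part (ii) by using genericity of $\xi$ to obtain a unique minimizer $x^{**}$ of $\xi^T[x]_d$ over the set of global minimizers of \reff{copo} (so that $f(x^{**})=f_{\min}<0$) and then invoking asymptotic convergence of the first-order moments of the optimizers of \reff{momran} to $x^{**}$. The paper only makes your two ``standard'' steps precise — the genericity via singular normal vectors of the compact convex moment body (Schneider), and the convergence via a scalarization of the constraint $X(x)\succeq 0$ so that Schweighofer's Corollary~3.5 applies.
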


\begin{proof}
	By Theorem \ref{thm:fin} (ii),  there  exists an integer $k_0>0$ such that $f_{k,sos}= f_{k,mom}=f_{\min}$ for all $k\geq k_{0}$. 
	
	(i) Note that the vector $u=(w^*_{e_1}, \dots,w^*_{e_{\sigma(n)}})$ is feasible for \reff{copo}. Since $f(x)$ is $\mathcal{S}^n_+$-copositive, we  know that $f(u)\leq f_{k,mom}<0$ if $f_{k,mom}<0$. Hence, Algorithm \ref{ag:CopoSDP} terminates at Step 1 when $k$ is sufficiently large, since $f_{k_0,mom}=f_{\min}\geq 0$.

	(ii) Since $f(x)$ is not $\mathcal{S}^n_+$-copositive,  we have $ f_{k,mom}=f_{\min}<0$ for  all $k\geq k_{0}$,  and   \reff{momran} is equivalent to 
	
	\be  \label{momran:1}
	\left\{ \baray{rl}
	\min & \langle  \xi^T[x]_d, w\rangle  \\
	\st & L^{(k)}_{\tr{X}-1}(w)=0,\\
	&  L_{X}^{(k)}(w) \succeq 0,~L_{f_{\min}-f}^{(k)}(w) \succeq 0,\\
	&L_{1-\|X\|_F^2}^{(k)}(w) \succeq 0,~L_{\|X\|_F^2-\frac{1}{n}}^{(k)}(w) \succeq 0,\\
	&\langle 1, w\rangle=1,~M_k(w) \succeq 0,~w \in \mathbb{R}^{\mathbb{N}_{2k}^{\sigma(n)}}.
	\earay \right.
	\ee
The above is the $k$th order moment relaxation for  the following problem:
	\be  \label{copo:equ:1}
	\left\{ \baray{rl}
	\min & \xi^T[x]_d  \\
	\st &  \tr{X(x)}=1,\\
	&    X(x)\succeq 0,~f_{\min}-f(x) \geq  0,\\
	&  1\geq \|X(x)\|_F^2,~\|X(x)\|_F^2\geq  \frac{1}{n}.\\
	\earay \right.
	\ee
The moment reformulation of \reff{copo:equ:1} is 
\be  \label{copo:equ:3}
\left\{ \baray{rl}
\min & \langle \xi^T[x]_d, v\rangle  \\
\st &  \langle 1, v\rangle=1,\\
&   v\in \mathscr{R}(K^{\prime}),\\
\earay \right.
\ee
where $K^{\prime}$ is the feasible set of \reff{copo:equ:1}, and $\mathscr{R}(K^{\prime})$ is the moment cone, i.e., the set of all truncated multi-sequences $v \in \mR^{\mathbb{N}^{\sigma(n)}_{d}}$ that admit a positive Borel measure supported on  $K^{\prime}$.
Similarly as in Theorem \ref{thm:fin} (i), we can show that every feasible point $v$ of \reff{copo:equ:3} satisfies $\|v\|^2\leq 2(d+1)$. Hence, \reff{copo:equ:3} is equivalent to 
\be  \label{copo:equ:4}
\left\{ \baray{rl}
\min & \langle \xi^T[x]_d, v\rangle  \\
\st &  \langle 1, v\rangle=1,\\
&  \|v\|^2\leq 2(d+1),\,\, v\in \mathscr{R}(K^{\prime}).\\
\earay \right.
\ee
The feasible set of  \reff{copo:equ:4} is a nonempty compact convex set. Hence,  \reff{copo:equ:4}  has a unique minimizer if and only if $\xi$ is a singular normal vector of the feasible set (see \cite[Section 2.2]{consch}). Let $\Omega$ be the set of all singular normal vectors.  Then, the set $\Omega$ has zero Lebesgue measure in the input space (cf. \cite[Section 2.2.4]{consch}).  Thus,  \reff{copo:equ:4}  has a unique minimizer for all $\xi\in  \mathbb{R}^{\mathbb{N}_d^{\sigma(n)}} \backslash \Omega$. This implies that \reff{copo:equ:1} also has a unique minimizer, which we denote  by  $x^*$.

 Let $w^{(k)}$  be the minimizer of \reff{momran:1} at the  order $k$. 
By \cite[Proposition 9]{sk09},  there exist  $d_1,\dots,d_t\in \qmod{X}$ such that 
\[
\{x\in \mR^{\sigma(n)} :X(x) \succeq 0\}=\{x\in \mR^{\sigma(n)} :d_1(x)\geq 0,\dots,d_t(x)\geq 0\}.
\]
Then, \reff{copo:equ:1} is equivalent to  the scalar polynomial optimization
	\be  \label{copo:equ:2} 
\left\{ \baray{rl}
\min & \xi^T[x]_d  \\
\st &  \tr{X(x)}=1,\\
&    f_{\min}-f(x) \geq  0,\\
&d_i(x)\geq 0~(i=1,\dots,t),\\
&  \|X(x)\|_F^2\leq 1,~\|X(x)\|_F^2\geq  \frac{1}{n}.\\
\earay \right.
\ee
Note that $w^{(k)}$ is asymptotically optimal, i.e., $\langle \xi^T[x]_d, w^{(k)}\rangle$ converges to the optimal value of \reff{copo:equ:1}. Furthermore,   every minimizer of \reff{momran:1} is also feasible for the 
moment relaxation of \reff{copo:equ:2}, by using subvectors. It follows from 	\cite[Corollary 3.5]{msch}
	that the sequence $\{u^{(k)}:=(w^{(k)}_{e_1}, \dots,w^{(k)}_{e_{\sigma(n)}})\}_{k=k_0}^{\infty}$  converges to the unique minimizer $x^*$.   The constraints $L_{X}^{(k)}[w] \succeq 0$ and $L^{(k)}_{\tr{X}-1}(w)=0$  imply that 
	\[
	X(u^{(k)})\succeq 0,\quad \tr{X(u^{(k)})}=1.
	\]
		Since  $f(x^*)\leq  f_{\min}<0$,  we know that $f(u^{(k)})<0$ when $k$ is sufficiently large.
	Therefore, for some sufficiently large $k$, Algorithm~\ref{ag:CopoSDP} returns $ u^{(k)}$.

\end{proof}

\section{An algorithm for testing $\mathcal{S}_+^n\times \mR_+^m$-copositivity}
\label{sc:othsem}

In this section, we propose an  algorithm to test $K$-copositivity of homogeneous polynomials, where $K$ is the  direct product of  the positive semidefinite cone $\mathcal{S}_+^{n}$ and the  nonnegative orthant $\mR_{+}^m$, i.e.,  $K=\mathcal{S}_+^n\times \mR_+^m$. 
Denote the variable vectors  
\[
x:=(x_{11},x_{12},\dots,x_{1n},x_{22},x_{23},\dots,x_{n-1,n},x_{nn}),\quad y:=(y_1,\dots,y_m).
\]
Let  $X(x)$
be the $n$-by-$n$ symmetric  matrix whose $(i,j)$-th entry is $x_{ij}$ for $i\leq j$. 
Consider the  problem
\be  \label{d:copo}
\left\{ \baray{rl}
\min & f(x,y)  \\
\st &\tr{X(x)}+y_1+\cdots+y_m=1,\\
& 	X(x) \succeq 0, y_1\geq 0,\dots,y_m\geq 0, \\
\earay \right.
\ee
where $f(x,y)$ is a homogeneous polynomial of degree $d$. Denote  
\[
h(x):=\tr{X(x)}-1,
\] 
and let $f^{\prime}_{\min}$ be the optimal value of $\reff{d:copo}$. Then,  $f(x,y)$ is copositive over  $ \mathcal{S}_+^{n}\times \mR_{+}^m$ if and only if $f^{\prime}_{\min}\geq 0$.


First, we show that  the nondegeneracy condition  holds at every feasible point of  \reff{d:copo}.

\begin{prop}\label{d:ndchold}
Let $(u,v)\in \mR^{\sigma(n)}\times \mR^{m}$ be a feasible point of  \reff{d:copo} with $\rank~ X(u)=r$. Then, the NDC \reff{CQ} holds at  $(u,v)$.
\end{prop}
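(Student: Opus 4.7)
The plan is to adapt the argument used for Proposition \ref{ndchold} (the pure SDP case) by incorporating the extra scalar constraints $y_k \ge 0$ through Proposition \ref{equindc}. Let $r=\rank\,X(u)$ and $\{q_1,\dots,q_{n-r}\}$ be an orthonormal basis of $\ker X(u)$; for each active index $k$ in the set $J:=\{k\in\{1,\dots,m\}: v_k=0\}$, the kernel of the $1\times 1$ matrix $[v_k]=[0]$ is all of $\mR$, so the corresponding vector produced by Proposition \ref{equindc} is just the standard basis vector $e_{\sigma(n)+k}\in \mR^{\sigma(n)+m}$. Thus the NDC at $(u,v)$ reduces to showing that
\[
\nabla \tilde h(u,v),\quad \nabla X(u,v)^*\!\Big[\tfrac{q_iq_j^T+q_jq_i^T}{2}\Big]\ (1\le i\le j\le n-r),\quad e_{\sigma(n)+k}\ (k\in J)
\]
are linearly independent, where $\tilde h(x,y):=\tr{X(x)}+y_1+\cdots+y_m-1$ is the equality constraint function.

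Next, I would assume a vanishing linear combination with coefficients $\mu$, $\lambda_{ij}$, and $\tau_k$ ($k\in J$), and read off its coordinates. Setting $M:=\sum_{1\le i\le j\le n-r}\lambda_{ij}\tfrac{q_iq_j^T+q_jq_i^T}{2}$, the $x_{ss}$-components give $\mu+M_{ss}=0$, the $x_{st}$-components ($s<t$) give $M_{st}=0$, the $y_k$-components for $k\in J$ give $\mu+\tau_k=0$, and the $y_k$-components for $k\notin J$ give just $\mu=0$.

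I would then split into two cases. If some $v_k>0$, i.e.\ $J\neq\{1,\dots,m\}$, the corresponding $y_k$-coordinate forces $\mu=0$ immediately; then $M$ is the zero matrix, and exactly as in the proof of Proposition \ref{ndchold} the identities $q_{i_0}^T M q_{j_0}=\tfrac12\lambda_{i_0j_0}$ force all $\lambda_{ij}=0$, while $\tau_k=-\mu=0$. If instead all $v_k=0$, so that $J=\{1,\dots,m\}$, then the equality constraint forces $\tr{X(u)}=1$, and the $x$-coordinate relations give $M=-\mu I$. Using that each $q_\ell$ lies in $\ker X(u)$, we obtain $X(u)M=0$, hence $-\mu X(u)=0$; since $\tr{X(u)}=1$ the matrix $X(u)$ is nonzero, so $\mu=0$, which returns us to $M=0$ and $\tau_k=0$, and the earlier $q_{i_0}^T M q_{j_0}$ argument again kills all $\lambda_{ij}$.

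The principal obstacle I anticipate is precisely the all-active case $J=\{1,\dots,m\}$, because there the $y$-coordinates alone cannot eliminate $\mu$; however, this is exactly the situation in which the trace-normalization forces $\tr{X(u)}=1$, allowing the very same trick used in Proposition \ref{ndchold} to close the argument. Once both cases are handled, the linear independence claim, and hence the NDC, follows immediately.
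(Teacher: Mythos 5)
Your proposal is correct and follows essentially the same route as the paper: the same reduction via Proposition \ref{equindc} to a linear-independence claim, the same case split on whether all $y$-coordinates are active (the paper's cases $\ell=m$ and $\ell<m$), and the same trace-normalization trick $\tr{X(u)}=1 \Rightarrow X(u)\neq 0 \Rightarrow \mu=0$ borrowed from Proposition \ref{ndchold} to handle the all-active case. The only cosmetic difference is that you phrase the key step as $X(u)M=-\mu X(u)=0$ rather than reading off the diagonal entries $u_{ss}M_{ss}=-\mu u_{ss}$, which is an equivalent streamlining.
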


\begin{proof}
Suppose  the zero entries of $v$ are $v_{k_1},\dots,v_{k_{\ell}}$.
Let  $\{q_1,\dots,q_{n-r}\}$ be an orthonormal basis of the kernel of  $X(u)$.	In view of  Proposition \ref{equindc}, it suffices to show that  the vectors 
\[
\begin{bmatrix} \nabla h(u)\\
e\\
\end{bmatrix},\quad  \begin{bmatrix} \nabla X(u)^*[\frac{q_i q_j^{T}+q_j q_i^{T}}{2}]\\
0\\
\end{bmatrix}~(1\leq i\leq j \leq n-r),\quad \begin{bmatrix}0\\
e_{k_t}
\end{bmatrix}~(t=1,\dots, \ell),
\]
are linearly independent. Here, $e\in \mR^m$ is the  vector with all entries equal to 1, and $e_{k_t}$ is the $k_t$-th  standard
basis vector in $\mR^m$.  Suppose there exist  real scalars $\mu$, $\lambda_{\ij}$ $(1\leq i\leq j \leq n-r)$, $\mu_{t}$ $(t=1,\dots,\ell)$ such that
\be \nonumber
\mu \begin{bmatrix} \nabla h(u)\\
	e\\
\end{bmatrix}+ \sum\limits_{1\leq i\leq j \leq n-r}\lambda_{ij}	 \begin{bmatrix} \nabla X(u)^*[\frac{q_i q_j^{T}+q_j q_i^{T}}{2}]\\
0\\
\end{bmatrix}+ \sum\limits_{t=1}^{\ell} \mu_{t} \begin{bmatrix}0\\
e_{k_t}
\end{bmatrix}=0.
\ee
This is equivalent to
\be \label{d1:ndc1}
\mu  \nabla h(u)+ \sum\limits_{1\leq i\leq j \leq n-r}\lambda_{ij}	 \nabla X(u)^*[\frac{q_i q_j^{T}+q_j q_i^{T}}{2}]=0,
\ee
\be \label{d2:ndc1}
\mu e+  \sum\limits_{t=1}^{\ell} \mu_{t}	e_{k_t}=0.
\ee 
We show that all these scalars are zero, which completes the proof.


If $\ell=m$, then $v=0$ and   $h(u)=\tr{X(u)}-1=0$. Similar to Proposition \ref{ndchold},   \reff{d1:ndc1} implies that $\mu=0$ and $\lambda_{ij}=0$ for  $1\leq i\leq j \leq n-r$. Therefore, we also have $\mu_{t}=0$ for $t=1,\dots,\ell$.

 If $\ell<m$,   equation \reff{d2:ndc1} implies  that $\mu=0$ and $\mu_{t}=0$ for $t=1,\dots,\ell$. Then,  \reff{d1:ndc1}   reduces to $\nabla X(u)^*[M]=0$ for the matrix 
\[
M=\sum\limits_{1\leq i\leq j \leq n-r}\lambda_{ij}\cdot \frac{q_i q_j^{T}+q_j q_i^{T}}{2}.
\]
We have that $M=0$, and  for  $1 \leq i_0 < j_0 \leq n-r$, it holds that
\[
\baray{ll}
0=q_{i_0}^TMq_{j_0}&=q_{i_0}^T(\sum\limits_{1 \leq i \leq j \leq n-r} \lambda_{ij}\cdot \frac{q_i q_j^{T}+q_j q_i^{T}}{2})q_{j_0}\\
&=\sum\limits_{1 \leq i \leq j \leq n-r} \lambda_{ij}\cdot \frac{q_{i_0}^Tq_i q_j^{T}q_{j_0}+q_{i_0}^Tq_j q_i^{T}q_{j_0}}{2}=\frac{1}{2}\lambda_{i_0j_0}.\\
\earay 
\]
Similarly, we have $0=q_{i_0}^TMq_{i_0}=\lambda_{i_0i_0}$ for $1\leq i_0\leq n-r$. Hence, we conclude that  $\lambda_{ij}=0$ for $1\leq i\leq j \leq n-r$.
\end{proof}

Let $(u,v)\in \mR^{\sigma(n)}\times \mR^{m}$ be a local minimizer of \reff{d:copo}.  By Proposition \ref{d:ndchold} and  Theorem \ref{KKT:cond},  there exist $\mu\in \mR,(\mu_1,\dots,\mu_m)\in \mR_+^m$ and  $\Lambda \in \mathcal{S}_+^n$ such that 
\be \label{d:kktnco}
\begin{bmatrix} \nabla_x f(u,v)\\
	\nabla_y f(u,v)\\
\end{bmatrix}=\mu \begin{bmatrix} \nabla h(u)\\
	e\\
\end{bmatrix}+ 	 \begin{bmatrix} \nabla X(u)^*[\Lambda]\\
	0\\
\end{bmatrix}+ \sum\limits_{t=1}^{m} \mu_{t} \begin{bmatrix}0\\
	e_{t}
\end{bmatrix},
\ee 
\[
\tr{\Lambda X(u)}=0,\quad \mu_1v_1=\cdots=\mu_mv_m=0. 
\]
By Euler's identity for homogeneous polynomials, we have 
\[
(u,v)^T\begin{bmatrix} \nabla_x f(u,v)\\
	\nabla_y f(u,v)\\
\end{bmatrix}=d\cdot f(u,v).
\]
Note that 
\[
(u,v)^T \begin{bmatrix} \nabla h(u)\\
	e\\
\end{bmatrix}=\tr{X(u)}+v_1+\cdots+v_m=1,
\]
\[
u^T(\nabla X(u)^* [\Lambda])=\tr{\Lambda X(u)}=0.
\]
Multiplying by $(u,v)^T$ on both sides of   \reff{d:kktnco}, we obtain
\[
\baray{ll}
d\cdot f(u,v)&=\mu \cdot (u,v)^T\begin{bmatrix} \nabla h(u)\\
	e\\
\end{bmatrix}+u^T(\nabla X(u)^* [\Lambda])+\sum\limits_{t=1}^{m} \mu_{t} v_t\\
&= \mu. \\
\earay
\]
Note that   \reff{d:kktnco} is equivalent to
\[
\partial_{x_{ii}}f(u,v)=\mu+\Lambda_{ii}~(i=1,\dots,n),
\]
\[
\partial_{x_{ij}}f(u,v)=2\Lambda_{ij}~(1\leq i<j\leq n),
\]
\[
\partial_{y_t}f(u,v)=\mu+\mu_t~(t=1,\dots,m).
\]
Then, we have
\[
\Lambda_{ii}=\partial_{x_{ii}}f(u,v)-d\cdot f(u,v)~(i=1,\dots,n),
\]
\[
\Lambda_{ij}=\frac{1}{2}\partial_{x_{ij}}f(u,v)~(1\leq i<j\leq n),
\]
\[
\mu_t=\partial_{y_t}f(u,v)-d\cdot f(u,v)~(t=1,\dots,m).
\]
Let $\Theta(x,y)$ be the $n$-by-$n$ symmetric polynomial matrix with entries given by $\Lambda_{ij}$, i.e.,
{\small
	\be \label{exp:theta}
	\Theta(x,y)= \left[\begin{matrix}
		\partial_{x_{11}}f-d\cdot f &\frac{1}{2}\partial_{x_{12}}f&\cdots &\frac{1}{2}\partial_{x_{1n}}f\\
		\frac{1}{2}\partial_{x_{12}}f&\partial_{x_{22}}f-d\cdot f&\cdots &\frac{1}{2}\partial_{x_{2n}}f\\
		\vdots 	&\vdots &\ddots &\vdots \\
		\frac{1}{2}\partial_{x_{1n}}f&\frac{1}{2}\partial_{x_{2n}}f&\cdots& \partial_{x_{nn}}f-d\cdot f
	\end{matrix}\right],
	\ee
}
 and let 
 \[
 p_t(x,y)=\partial_{y_t}f-d\cdot f \text{\quad for\,\,} t=1,\dots,m.
 \]

Since $X(u)\succeq 0$, $\Theta(u) \succeq 0$, and $\tr{\Theta(u) X(u)}=0$, we have  $X(u)\Theta(u)=0$. 
 In the following, we give an estimate on the Frobenius norm of $\|X(u)\|_F+\|v\|^2$. Let $\lambda_1,\dots,\lambda_n$ be the eigenvalues of $X(u)$. Since $X(u)\succeq 0$ and $\tr{X(u)}+v_1+\cdots+v_m=1$, it holds that
\[
\lambda_1\geq 0,\dots,\lambda_n \geq 0,~ \lambda_1+\cdots+\lambda_n+v_1+\cdots+v_m=1.
\]
Then, we have that $0\leq \lambda_i \leq 1$ and 
\[
\baray{ll}
\|X(u)\|^2_F+\|v\|^2&=\lambda_1^2+\cdots+\lambda_n^2+v_1^2+\cdots+v_m^2\\
&\leq \lambda_1+\cdots+\lambda_n+v_1+\cdots+v_m\\
&=1,\\
\earay
\]
\[
\baray{ll}
\|X(u)\|^2_F+\|v\|^2&=\frac{1}{n+m}\cdot (1^2+\cdots+1^2)(\lambda_1^2+\cdots+\lambda_n^2+v_1^2+\cdots+v_m^2)\\
&\geq \frac{1}{n+m}\cdot (\lambda_1+\cdots+\lambda_n+v_1+\cdots+v_m)^2\\
&=\frac{1}{n+m},\\
\earay
\]
where the last inequality follows from Cauchy's inequality.

Since the feasible set of \reff{d:copo} is compact, its optimal value is achievable. Hence,  \reff{d:copo} is equivalent to the following strengthened reformulation:
\be  \label{d:copo:equ}
\left\{ \baray{rl}
\min & f(x,y)  \\
\st &  \tr{X(x)}+y_1+\cdots+y_m=1,\\
&   X(x)\succeq 0,~\Theta(x,y)\succeq 0,~X(x)\Theta(x,y)=0,\\
&y_t\geq 0,~p_t(x,y)\geq 0,~p_t(x,y)y_t=0~ (t=1,\dots,m),\\
&  1\geq \|X(x)\|_F^2+\|y\|^2,~\|X(x)\|_F^2+\|y\|^2\geq  \frac{1}{n+m}.\\
\earay \right.
\ee
Clearly, the optimal value of \reff{d:copo:equ} is equal to $f^{\prime}_{\min}$.

Let $d_{0}:=\lceil \frac{d+1}{2} \rceil$. 
We apply the matrix Moment-SOS relaxations to solve \reff{d:copo:equ}. For an order $k\geq  d_0$, the $k$th order SOS relaxation of \reff{d:copo:equ} is 
	\be  \label{d:sos}
	\left\{ \baray{rll}
	\max & \gamma &\\
	\st &f-\gamma \in &\ideal{\tr{X}+\sum\limits_{t=1}^m y_t-1,X\Theta,p_1y_1,\dots,p_my_m}_{2k}\\
	& &+\qmod{X,\Theta,p_1,\dots,p_m,y_1,\dots,y_m}_{2k}\\
	&  &+\qmod{1-\|X\|_F^2-\|y\|^2,\|X\|_F^2+\|y\|^2-\frac{1}{n+m}}.
	\earay \right.
	\ee
The dual problem of \reff{d:sos} is the $k$th order moment relaxation:

\be  \label{d:mom}
\left\{ \baray{rl}
\min & \langle f, z\rangle  \\
\st & L^{(k)}_{\tr{X}+y_1+\cdots+y_m-1}(z)=0,\\
&  L_{X}^{(k)}(z) \succeq 0,~L_{\Theta}^{(k)}(z) \succeq 0,~L^{(k)}_{X \Theta}(z)=0,\\
& L_{y_t}^{(k)}(z) \succeq 0,~L_{p_t}^{(k)}(z) \succeq 0,~L^{(k)}_{p_ty_t}(z)=0~(t=1,\dots,m),\\
&L_{1-\|X\|_F^2-\|y\|^2}^{(k)}(z) \succeq 0,~L_{\|X\|_F^2+\|y\|^2-\frac{1}{n+m}}^{(k)}(z) \succeq 0,\\
&\langle 1, z\rangle=1,~M_k(z) \succeq 0,~ z \in \mathbb{R}^{\mathbb{N}_{2k}^{\sigma(n)+m} } .
\earay \right.
\ee
Let $f^{\prime}_{k,sos}$ and $f^{\prime}_{k,mom}$ denote the optimal values of \reff{d:sos} and \reff{d:mom},  respectively.

The  hierarchy  \reff{d:sos}--\reff{d:mom} shares similar properties with the hierarchy \reff{sos}--\reff{mom}. 

\begin{thm}\label{d:thm:fin}
	Suppose $f(x,y)$ is a  homogeneous polynomial of degree $d$. Then, we have:
	
	\bit
	
	\item[(i)] The relaxation \reff{d:mom} is feasible, and  its optimal value is attainable.

	\item[(ii)]  $f^{\prime}_{k,sos}= f^{\prime}_{k,mom}=f^{\prime}_{\min}$ for all $k$ sufficiently large.
	
	\eit 
\end{thm}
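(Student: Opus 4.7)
The plan is to mirror the strategy of Theorem~\ref{thm:fin}, adapting each step to accommodate both the matrix complementarity $X\Theta=0$ and the scalar complementarities $p_ty_t=0$, along with the extra nonnegativity constraints $y_t\geq 0$.

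For part (i), I would first invoke compactness of the feasible set of \reff{d:copo} to get a minimizer $(x^*,y^*)$. Proposition \ref{d:ndchold} guarantees NDC at $(x^*,y^*)$, so Theorem~\ref{KKT:cond} produces Lagrange multipliers whose expressions match the construction of $\Theta$ and $p_t$; hence $(x^*,y^*)$ is feasible for \reff{d:copo:equ} and $[(x^*,y^*)]_{2k}$ is feasible for \reff{d:mom}. For attainability, I would show the feasible set of \reff{d:mom} is compact by combining $L^{(k)}_{1-\|X\|_F^2-\|y\|^2}(z)\succeq 0$ with $M_k(z)\succeq 0$: since
\[
1-\|(x,y)\|^2 \, = \, 1-\|X(x)\|_F^2-\|y\|^2+\sum_{1\leq i<j\leq n}x_{ij}^2,
\]
we get $L^{(k)}_{1-\|(x,y)\|^2}(z)\succeq 0$, so $\langle \|(x,y)\|^{2i},z\rangle\leq 1$ for $i=0,\dots,k$, which bounds $\|M_k(z)\|_F^2\leq (\tr{M_k(z)})^2\leq 2(k+1)$, exactly as in Theorem~\ref{thm:fin}(i).

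For part (ii), the main work is an analog of Lemma~\ref{kktfini}. Let
\[
I' \, := \, \ideal{\,\tr{X}+y_1+\cdots+y_m-1,\; X\Theta,\; p_1y_1,\dots,p_my_m\,},
\]
and let $W$ be an irreducible subvariety of $V_\mC(I')$ containing a real point. I would show $f$ is constant on $W$ by the same path-connectedness argument, using the Lagrangian
\[
\mathcal{L}(x,y) \, = \, f(x,y) - d\cdot f(x,y)\bigl(h(x)+\textstyle\sum_t y_t\bigr) - \tr{\Theta(x,y)X(x)} - \sum_{t=1}^m p_t(x,y)y_t .
\]
Given a piecewise-smooth path $(x(t),y(t))\subseteq W$, I would partition each smooth interval according to both $\rank X(x(t))$ \emph{and} the active set $\{t: y_t(t)=0\}$, which together only take finitely many values on an irreducible variety. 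On each stratum, arguing exactly as in Lemma~\ref{kktfini} via the $Q$-congruence trick handles the matrix multiplier $\Theta$; the scalar complementarity $p_ty_t=0$ is easier since, on the stratum where $y_t\equiv 0$, the contribution $p_t y_t$ has zero $t$-derivative trivially, and on the stratum where $y_t\neq 0$ we get $p_t\equiv 0$ locally, so the analogous cancellation gives $\tfrac{d}{dt}(p_ty_t)=0$. Combining these with Euler's identity yields $\tfrac{d f}{dt}\equiv 0$ on each stratum, hence $f$ is constant on $W$.

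The rest follows the template of Theorem~\ref{thm:fin}(ii) almost verbatim. Shift so $f_{\min}=0$, decompose $V_\mC(I')=\mathcal{K}_0\cup\mathcal{K}_1\cup\cdots\cup\mathcal{K}_\ell$ with $\mathcal{K}_0\cap\mR^{\sigma(n)+m}=\emptyset$ and $f\equiv v_i$ on $\mathcal{K}_i$, and take a primary decomposition $I'=I_0\cap\cdots\cap I_\ell$. For $\mathcal{K}_0$, use \cite[Cor.~4.1.8]{bcr}; for each $\mathcal{K}_i$ with $v_i<0$, use \cite[Prop.~9]{sk09} to convert both $X\succeq 0$ and $y_t\geq 0$ (together with $\Theta\succeq 0$ and $p_t\geq 0$) into finitely many scalar inequalities, then apply \cite[Cor.~4.4.3]{blo} and Putinar's Positivstellensatz (the quadratic module is Archimedean because $1-\|X\|_F^2-\|y\|^2$ sits in it); for $\mathcal{K}_{\ell_0}$ and $v_i>0$, apply Hilbert's Strong Nullstellensatz and the standard $\sqrt{\,\cdot\,}$-series construction. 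Gluing the pieces with the partition-of-unity polynomials from \cite[Lem.~3.3]{niejac} gives $f+\epsilon\in \ideal{\cdots}_{2k}+\qmod{\cdots}_{2k}$ for all $k$ large, yielding $f^{\prime}_{k,sos}\geq f^{\prime}_{\min}-\epsilon$ and hence $f^{\prime}_{k,sos}=f^{\prime}_{k,mom}=f^{\prime}_{\min}$ eventually. The hardest step is the stratification argument with the mixed matrix/scalar complementarity; the rest is bookkeeping.
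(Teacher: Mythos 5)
Your proposal is correct and takes exactly the approach the paper intends: the paper omits this proof, stating only that it is ``similar to that of Theorem \ref{thm:fin},'' and your writeup supplies precisely that adaptation, including the one genuinely new ingredient --- stratifying each smooth piece of the path jointly by $\operatorname{rank} X(x(t))$ and the active set of the $y$-variables, so that the extra term $\sum_t \frac{dp_t}{dt}y_t$ vanishes stratum by stratum (trivially where $y_t\equiv 0$, and because $p_t\equiv 0$ locally where $y_t\neq 0$). The remaining steps (compactness bound via $1-\|X\|_F^2-\|y\|^2$, primary decomposition, Positivstellens\"atze, and the partition-of-unity gluing) carry over verbatim as you describe.
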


\vspace{.2cm}

The proof of  Theorem \ref{d:thm:fin} is similar to that of Theorem \ref{thm:fin}, and we omit it for brevity. The algorithm for testing $\mathcal{S}^n_{+}\times \mR_{+}^m$-copositivity is given below.

\begin{alg}
	\label{ag2:CopoSDP} \rm
	Testing $\mathcal{S}^n_{+}\times \mR_{+}^m$-copositivity of  homogeneous polynomials. 
	
	\begin{description}
		\item [Input]  A homogeneous polynomial $f(x,y)$ of degree $d$.
		\item [Step~0]
		Choose a generic vector $\xi \in \mathbb{R}^{\mathbb{N}_d^{\sigma(n)+m}}$. Let $k:=\lceil \frac{d+1}{2}\rceil$.
		
		\item [Step~1]
		Solve the semidefinite relaxation pair \reff{d:sos}--\reff{d:mom}. If the  optimal value $f^{\prime}_{k,mom} \geq 0$, output that $f$ is $\mathcal{S}_+^n\times \mR_+^m$-copositive  and stop; otherwise,  go to Step 2.
		
		\item[Step~2]
		Solve the semidefinite program
		
		\be  \label{d:momran}
		\left\{ \baray{rl}
		\min & \langle \xi^T[x\,\, y]_d, w\rangle  \\
		\st & L^{(k)}_{\tr{X}+y_1+\cdots+y_m-1}(w)=0,\\
		& L_{X}^{(k)}(w) \succeq 0,~L_{f^{\prime}_{k,mom}-f}^{(k)}(w) \succeq 0,\\
		& L_{y_t}^{(k)}(w) \succeq 0\,\,(t=1,\dots,m),\\
		&L_{1-\|X\|_F^2-\|y\|^2}^{(k)}(w) \succeq 0,~L_{\|X\|_F^2+\|y\|^2-\frac{1}{n+m}}^{(k)}(w) \succeq 0,\\
		&\langle 1, w\rangle=1,~M_k(w) \succeq 0,~ w \in \mathbb{R}^{\mathbb{N}_{2k}^{\sigma(n)+m} } .
		\earay \right.
		\ee
		If \reff{d:momran} is feasible, compute an optimizer $w^*$ and go to Step 3;  otherwise, let $k:=k+1$ and go to Step 1.

		\item[Step~3] Let $u=(w^*_{e_1}, \dots,w^*_{e_{\sigma(n)}}) \in \mR^{\sigma(n)}$, $v=(w^*_{e_{\sigma(n)+1}},\dots,w^*_{e_{\sigma(n)+m}}) \in  \mR^{m}$. If $f(u,v)<0$, output that $f(x,y)$ is not $\mathcal{S}_+^n\times \mR_+^m$-copositive, return $(u,v)$ and stop;  otherwise, let $k:=k+1$ and go to Step 1.
		
		\item [Output] A certificate that $f$ is $\mathcal{S}_+^n\times \mR_+^m$-copositive   or a refutation $0\neq (u,v)\in \mR^{\sigma(n)}\times \mR^{m}$  such that $X(u)\succeq 0,\, v\geq 0$ and  $f(u,v)<0$.

	\end{description}
\end{alg}

Algorithm \ref{ag2:CopoSDP} also terminates in  finitely many iterations, either providing a
 certificate of $\mathcal{S}_+^n\times \mR_+^m$-copositivity, or returning  a refutation.   The proof is similar to that of Theorem \ref{finitethm} and is omitted for cleanness.

\begin{thm}
	Suppose $f(x,y)$ is a homogeneous polynomial of degree $d$. Then,	Algorithm \ref{ag2:CopoSDP} terminates in finitely many iterations. To be more specific, we have:
	
	\bit 
	
	\item[(i)] If $f(x,y)$ is $\mathcal{S}_+^n\times \mR_+^m$-copositive, we have $f^{\prime}_{k,mom}\geq 0$ for all sufficiently large $k$. Consequently, Algorithm \ref{ag2:CopoSDP} outputs that $f$ is $\mathcal{S}_+^n\times \mR_+^m$-copositive.

	\item[(ii)] If $f(x,y)$ is not $\mathcal{S}_+^n\times \mR_+^m$-copositive,	Algorithm \ref{ag2:CopoSDP}  returns a nonzero
	vector $(u,v)\in \mR^{\sigma(n)}\times \mR^{m}$ satisfying $X(u)\succeq 0,v\geq 0$ such that $f(u,v)<0$.

	\eit 
\end{thm}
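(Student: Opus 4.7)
The plan is to mirror the proof of Theorem~\ref{finitethm}, substituting the strengthened reformulation \reff{d:copo:equ} for \reff{copo:equ} and using Theorem~\ref{d:thm:fin}(ii) as the finite-convergence ingredient. Invoking Theorem~\ref{d:thm:fin}(ii), I would first fix an order $k_0$ such that $f^{\prime}_{k,sos}=f^{\prime}_{k,mom}=f^{\prime}_{\min}$ for every $k\ge k_0$.

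For part (i), copositivity of $f$ gives $f^{\prime}_{\min}\ge 0$, hence $f^{\prime}_{k,mom}\ge 0$ for every $k\ge k_0$, and Algorithm~\ref{ag2:CopoSDP} terminates at Step~1. To rule out a spurious Step~3 termination at smaller $k$, note that the constraints on $w^*$ in \reff{d:momran} force the extracted pair $(u,v)=(w^*_{e_1},\dots,w^*_{e_{\sigma(n)}},w^*_{e_{\sigma(n)+1}},\dots,w^*_{e_{\sigma(n)+m}})$ to satisfy $X(u)\succeq 0$, $v\ge 0$ and $\operatorname{tr}(X(u))+\sum_t v_t=1$, so $(u,v)$ is feasible for \reff{d:copo} and hence $f(u,v)\ge 0$.

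For part (ii), $f^{\prime}_{\min}<0$, so for $k\ge k_0$ we have $f^{\prime}_{k,mom}<0$ and the algorithm reaches Step~2. I would show that \reff{d:momran} is then the $k$th order moment relaxation of the scalar polynomial optimization problem obtained by replacing the cone constraint $X(x)\succeq 0$ with the finite polynomial inequalities $d_1(x)\ge 0,\dots,d_t(x)\ge 0$ guaranteed by \cite[Proposition 9]{sk09}, and by appending the cut $f^{\prime}_{\min}-f(x,y)\ge 0$ together with the compact-set constraints in \reff{d:copo:equ}. The feasible set of this auxiliary problem is precisely the minimizer set of \reff{d:copo}, a nonempty compact convex semialgebraic set contained in the bounded region $\|X(x)\|_F^2+\|y\|^2\le 1$.

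The crucial step is the uniqueness of the minimizer: lifting to the moment cone $\mathscr{R}(K'')$ of the auxiliary feasible set and arguing as in \cite[Section 2.2]{consch}, the set of $\xi\in\mR^{\mathbb{N}_d^{\sigma(n)+m}}$ for which the linear functional $\langle \xi^T[x\,\,y]_d,\cdot\rangle$ does not attain its minimum at a unique point of $\mathscr{R}(K'')$ has Lebesgue measure zero; a generic $\xi$ therefore forces a unique optimizer $(x^*,y^*)$ of the auxiliary problem. Applying Schweighofer's extraction theorem \cite[Corollary 3.5]{msch} to the subvector sequence then yields $(u^{(k)},v^{(k)})\to(x^*,y^*)$. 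Since $f(x^*,y^*)\le f^{\prime}_{\min}<0$, eventually $f(u^{(k)},v^{(k)})<0$ and the algorithm returns this refutation at Step~3. The main obstacle is precisely this uniqueness/genericity step: one must verify that the additional constraints (the coupling equalities $X\Theta=0$ and $p_ty_t=0$ and the positivity of the multipliers $p_t$ and matrix $\Theta$) do not destroy compactness of the lifted feasible moment set and that the singular-normal set remains of measure zero for the truncated moment cone of the enlarged constraint system; this is inherited from the estimate $\|M_k(w)\|_F\le O(\sqrt{k})$ obtained exactly as in Theorem~\ref{thm:fin}(i).
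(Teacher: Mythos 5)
Your proposal matches the paper's intent exactly: the paper omits this proof, stating only that it is ``similar to that of Theorem \ref{finitethm}'', and your argument is a faithful transcription of that proof with the variables $(x,y)$, the reformulation \reff{d:copo:equ}, and Theorem \ref{d:thm:fin}(ii) substituted in the right places. Two cosmetic remarks: the minimizer set of \reff{d:copo} need not itself be convex (convexity is only needed, and only available, for the lifted feasible set in the truncated moment cone, which is where you in fact apply the singular-normal-vector argument from \cite{consch}), and the coupling constraints $X\Theta=0$ and $p_t y_t=0$ that you flag as a potential obstacle in your last sentence do not appear in the Step-2 program \reff{d:momran} at all, so that verification is vacuous.
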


\bigskip

We  remark that the method developed in this section can be readily extended   to test $K$-copositivity, where  $K$ is the direct product of multiple positive semidefinite cones and  nonnegative orthants.

\section{Numerical examples}
\label{sec:num}

This section presents some examples of applying Algorithm \ref{ag:CopoSDP} to test $\mathcal{S}^n_{+}$-copositivity  and Algorithm \ref{ag2:CopoSDP} to test $\mathcal{S}^n_{+}\times \mR^m_+$-copositivity. The computations are implemented in MATLAB
R2024a, on a Lenovo Laptop with CPU@1.40GHz and RAM 32.0G.
The relaxations
\reff{sos}--\reff{mom}, \reff{d:sos}--\reff{d:mom} are modeled and solved using Yalmip  \cite{yalmip}, which calls the SDP solver SeDuMi \cite{sturm}. We refer to Remark \ref{rem1} for  some numerical settings. The columns labeled ``lower bound" represent the optimal value of the relaxation  \reff{mom} or \reff{d:mom},
while the columns labeled ``time" represent the computational time. For neatness, only four decimal digits are displayed for computational results.


\subsection{The case that $K=\mathcal{S}_+^n$}
We present some examples on  testing $\mathcal{S}^n_{+}$-copositivity using  Algorithm \ref{ag:CopoSDP}.

\begin{exm}\label{exc1}
	Consider the case where $n=2$, and 

\[
f_1=x_{11}^2 x_{12}+x_{11} x_{12}^2+x_{22}^3-3 x_{11} x_{12} x_{22},
\]	
\[
f_2=x_{11}^3+x_{12}^3+x_{22}^3-x_{11}^2 x_{12}-x_{11} x_{12}^2-x_{11}^2 x_{22}  -x_{11} x_{22}^2-x_{12}^2 x_{22}-x_{12} x_{22}^2+3 x_{11} x_{12} x_{22},
\]
\[
f_3=x_{11}^2 x_{12}+x_{12}^2 x_{22}+x_{22}^2 x_{11}-3 x_{11} x_{12} x_{22}.
\]
The polynomials $f_i(x_{11}^2,x_{12}^2,x_{22}^2)$ for $i=1,2,3$ correspond  to the Motzkin, Robinson, and Choi-Lam polynomials respectively, all of which are known to be nonnegative but not SOS. Consequently, $f_1,f_2,f_3$ are $\mR^3_+$-copositive. However, they are not $\mathcal{S}^2_{+}$-copositive. At the  order $k=2$, Algorithm \ref{ag:CopoSDP} returns refutations  $(0.9570,-0.2029,0.0430)$, $(0.5000,-0.5000,0.5000)$, $(0.9390,-0.2394,0.0610)$ for $f_1,f_2,f_3$, respectively.  The
computational results
are presented  in Table \ref{tab:exc1}.

\begin{table}[htbp]
	\centering
	\begin{tabular}{|c|c|c|c|c|}
		\hline
		\multirow{2}{*}{polynomials}& \multicolumn{2}{c|}{k=2}& \multicolumn{2}{c|}{k=3}\\
		\cline{2-5}  
		& lower bound &time &lower bound &time\\
		\hline
		$f_1$ &  -0.1213 &  0.1998  &  -0.1213& 0.3477\\
		\hline
		$f_2$ &-0.5000 & 0.1864 &-0.5000 &0.3150\\
		\hline
		$f_3$ &-0.1629 & 0.1931 &  -0.1629
		 &0.3237\\
		\hline
	\end{tabular}
	\caption{Computational results for Example \ref{exc1}}
	\label{tab:exc1}
\end{table}

\end{exm}

\begin{exm}\label{exc2}
		Consider the case where $n=3$, and 
	
	\[
	f_4=x_{11} x_{22}-x_{12}^2+x_{22} x_{33}-x_{23}^2,
	\]	
	\[
	f_5=x_{22}+x_{33}+10(x_{11} x_{22}-x_{12}^2),
	\]
	\[
	f_6=\det(X(x)).
	\]
	Here, $\det(X(x))$ is the determinant of $X(x)$.
	It can be observed that these polynomials are all  $\mathcal{S}^3_{+}$-copositive.  Algorithm \ref{ag:CopoSDP} confirms that $f_4$ and $f_5$ are $\mathcal{S}^3_{+}$-copositive  at the  order $k=2$, and that $f_6$ is $\mathcal{S}^3_{+}$-copositive  at  $k=3$, up to tiny round-off errors. The
	computational results
	are presented  in Table \ref{tab:exc2}.

	\begin{table}[htbp]
		\centering
		\begin{tabular}{|c|c|c|c|c|}
			\hline
			\multirow{2}{*}{polynomials}& \multicolumn{2}{c|}{k=2}& \multicolumn{2}{c|}{k=3}\\
			\cline{2-5}  
			& lower bound &time &lower bound &time\\
			\hline
			$f_4$ & -2.8175$\cdot 10^{-10}$  & 0.3497  & -1.3782 $\cdot 10^{-9}$ & 4.4723\\
			\hline
			$f_5$ & -1.4491$\cdot 10^{-14}$  & 0.5040   &  -1.2175$\cdot 10^{-14}$&4.6743 \\
			\hline
			$f_6$ &  -0.0208 &  0.3850   & -2.8439$\cdot 10^{-9}$ & 4.3190\\		
			\hline
		\end{tabular}
		\caption{Computational results for Example \ref{exc2}}
		\label{tab:exc2}
	\end{table}

\end{exm}

\begin{exm}\label{exc3}
	Consider the polynomial 
	\[
	f(x)=\sum\limits_{i=1}^{n-1} x_{ii} x_{i+1,i+1}-x_{i,i+1}^2.
	\]	
Since the term $x_{ii} x_{i+1,i+1}-x_{i,i+1}^2$ is the  determinant of the principal submatrix of $X(x)$ corresponding to  the $i$th, $(i+1)$-th rows and  columns,	we know that $f(x)$ is  $\mathcal{S}^n_{+}$-copositive.  For $n=2,3,4$, Algorithm \ref{ag:CopoSDP} confirms that $f$ is $\mathcal{S}^n_{+}$-copositive  at the  order $k=2$.  The
	computational results for these values of  $n$ at  $k=2$
	are presented  in Table \ref{tab:exc3}.

	\begin{table}[htbp]
		\centering
		\begin{tabular}{|c|c|c|c|c|}
			\hline
		  n& 2& 3 &4\\
			\hline
		    lower bound & -1.6894$\cdot10^{-10}$   &  -2.8175$\cdot10^{-10}$  &-4.4901$\cdot10^{-8}$   \\
			\hline
			time & 0.2037  & 0.3497   & 3.1727  \\
			\hline

		\end{tabular}
		\caption{Computational results for Example \ref{exc3}}
		\label{tab:exc3}
	\end{table}

\end{exm}

\begin{exm}\label{exc11}
Consider the case where $n=4$, 
	$f(x)=\operatorname{tr}((XA)^2)$, and 
	$$
	A=\begin{bmatrix}
		1 & -0.72 & -0.59 & 1 \\
		-0.72 & 1 & -0.6 & -0.46 \\
		-0.59 & -0.6 & 1 & -0.6 \\
		1 & -0.46 & -0.6 & 1
	\end{bmatrix}.
	$$	
This example is from \cite{fgnr24}, where the authors  guess that $f$ is  $\mathcal{S}_{+}^4$-copositive. At the  order $k=3$, Algorithm \ref{ag:CopoSDP}  confirms that $f$ is $\mathcal{S}^4_{+}$-copositive, up to tiny round-off errors. The
computational results 
are presented  in Table \ref{tab:exc11}.

\begin{table}[htbp]
	\centering
	\begin{tabular}{|c|c|c|c|}
		\hline
		 \multicolumn{2}{|c|}{k=2}& \multicolumn{2}{c|}{k=3}\\
		\cline{1-4}  
		 lower bound &time &lower bound &time\\
		\hline
		  $-3.2167\cdot 10^{-5}$ &  4.8262   &  $ -1.2428\cdot 10^{-7}$& 1109.5331\\
		\hline
	\end{tabular}
	\caption{Computational results for Example \ref{exc11}}
	\label{tab:exc11}
\end{table}

\end{exm}

\begin{exm}\label{exc5}
	Consider the case where $n=5$, 
	$f(x)=\operatorname{tr}((X^2A+XAX)$, and 
	$A$ is the  Horn matrix, i.e., 
	$$
	A=\begin{bmatrix}
		1 & -1 & 1 & 1 & -1 \\
		-1 & 1 & -1 & 1 & 1 \\
		1 & -1 & 1 & -1 & 1 \\
		1 & 1 & -1 & 1 & -1 \\
		-1 & 1 & 1 & -1 & 1
	\end{bmatrix}.
	$$
	This example is from \cite{fgnr24}, where it is verified that $f$ is not  $\mathcal{S}_{+}^5$-copositive. At the  order $k=2$, Algorithm \ref{ag:CopoSDP} 
	 returns a refutation
	\[
	X^*=	\begin{bmatrix}
		0.2000 & 0.0618 & -0.1618 & -0.1618 & 0.0618 \\
		0.0618 & 0.2000 & 0.0618 & -0.1618 & -0.1618 \\
		-0.1618 & 0.0618 & 0.2000 & 0.0618 & -0.1618 \\
		-0.1618 & -0.1618 & 0.0618 & 0.2000 & 0.0618 \\
		0.0618 & -0.1618 & -0.1618 & 0.0618 & 0.2000
	\end{bmatrix}.
	\]
	The computation took around 70.1866 seconds.

\end{exm}

\subsection{The case that $K=\mathcal{S}_+^n\times \mR_+^m$}
We present some examples on  testing $\mathcal{S}^n_{+}\times \mR_+^m$-copositivity using  Algorithm \ref{ag2:CopoSDP}.

\begin{exm}\label{aexc1}
	Consider the case where  $n=2$, $m=2$, $f(x,y)=(x^T ~y^T)A(x ~y)$, and $A$ is the  Horn matrix as in Example \ref{exc5}.
	  At the  order $k=3$, Algorithm \ref{ag2:CopoSDP} 
	  returns a refutation $(u,v)$, where 
	  \[
	  u=(0.0769,   -0.0769,    0.0769),\,\, v=(0.4231,    0.4231).
	  \]
	  The computation took around 4.9563 seconds.

	\end{exm}

\begin{exm}\label{aexc1:2}

	Consider the case where  $n=3$, $m=1$,  $f(x,y)=(x^T ~y^T)A(x ~y)$, and $A$ is the Hoffman-Pereira matrix, i.e.,
$$
A=\begin{bmatrix}
	1 & -1 & 1 & 0 & 0 & 1 & -1 \\
	-1 & 1 & -1 & 1 & 0 & 0 & 1 \\
	1 & -1 & 1 & -1 & 1 & 0 & 0 \\
	0 & 1 & -1 & 1 & -1 & 1 & 0 \\
	0 & 0 & 1 & -1 & 1 & -1 & 1 \\
	1 & 0 & 0 & 1 & -1 & 1 & -1 \\
	-1 & 1 & 0 & 0 & 1 & -1 & 1
\end{bmatrix}.
$$
  At the  order $k=2$, Algorithm \ref{ag2:CopoSDP}  
   returns a refutation $(u,v)$, where 
  \[
  u=(0.4172 ,  -0.0716,   -0.2367,    0.0123,    0.0406 ,   0.1342),\,\, v= 0.4362.
  \]
   The computation took around 1.3549 seconds.

\end{exm}


\begin{exm}\label{aexc2}
	
	Consider the case where $n=2$, $m=4$,  $f(x,y)=(x^T ~y^T)A_{\alpha}(x ~y)$, and $A_{\alpha}$ is a perturbation of  the Hoffman-Pereira matrix, i.e.,
	$$
	A_{\alpha}=\begin{bmatrix}
		1 & -1 & 1 & 0 & 0 & 1 & -1 \\
		-1 & 1 & -1 & 1 & 0 & 0 & 1 \\
		1 & -1 & 1 & -1 & 1 & 0 & 0 \\
		0 & 1 & -1 & (1+\alpha)^2  & -1 & 1 & 0 \\
		0 & 0 & 1 & -1 & (1+\alpha)^2 & -1 & 1 \\
		1 & 0 & 0 & 1 & -1 & (1+\alpha)^2  & -1 \\
		-1 & 1 & 0 & 0 & 1 & -1 & (1+\alpha)^2 
	\end{bmatrix}.
	$$
 For different values of $\alpha$, Algorithm \ref{ag2:CopoSDP}  either certifies that $f$ is $\mathcal{S}^2_{+}\times \mR_+^4$-copositive or returns a refutation at the  order $k=2$. The
 computational results for the order $k=2$
 are presented  in Table \ref{tab:aexc2}.

	\begin{table}[htbp]
		\centering
		\begin{tabular}{|c|c|c|c|}
			\hline
			$\alpha$&lower bound &time &copositivity\\
			\cline{2-3}  
			\hline
				0.01 &    -0.0229 &  1.6916    & No  \\
			\hline
				0.02 & -0.0152  & 1.5458   & No  \\
			\hline
				0.03 &  -0.0075 &  1.6280   & No  \\
			\hline
				0.04 &   1.2918$\cdot 10^{-4}$ &   1.5394 & Yes   \\
			\hline
				0.05 &  0.0078&  1.5813  &  Yes  \\
			\hline
				0.06 & 0.0154  &  1.7282   & Yes   \\
			\hline
				0.07 & 0.0229  & 1.5787      &  Yes  \\
			\hline
				0.08 & 0.0304  & 1.5329      &  Yes \\
			\hline
				0.09 &   0.0379  &  1.4894    &   Yes \\
			\hline
				0.10 & 0.0453  & 1.5831   &   Yes \\
			\hline
		\end{tabular}
		\caption{Computational results for Example \ref{aexc2}}
		\label{tab:aexc2}
	\end{table}

\end{exm}

\section{Conclusions and discussions}
\label{sec:dis}

In this paper, we  propose  an efficient   algorithm   for testing  $\mathcal{
S}_{+}^n$-copositivity of  homogeneous polynomials. It involves solving a sequence of semidefinite programs. A remarkable property of the algorithm is that it always terminates in finitely many iterations, either certifying the  copositivity or returning a   vector that refutes copositivity.
  We further generalize this algorithm to test $\mathcal{S}_+^n\times \mR_+^m$-copositivity  of homogeneous polynomials.  
Preliminary numerical experiments
demonstrate the efficiency of our algorithms. Moreover, the methods developed in this paper can be naturally extended to test   $K$-copositivity when $K$ is the direct product of multiple positive semidefinite cones and  nonnegative orthants.

The algorithms rely on the strengthened  matrix optimization reformulations,  inspired by the work  \cite{hnlme}.
In  \cite{hnlme}, the authors introduced tight Moment--SOS relaxations for polynomial matrix optimization without equality constraints.
We remark that the methods and proof framework developed in this paper can be extended to construct tight relaxations for polynomial matrix optimization  with equality constraints,  extending the approach  in \cite{hnlme}.

Consider the optimization:
\be  \label{con:nsdp}
\left\{ \baray{rl}
\min\limits_{x\in \mR^n} & f(x)  \\
\st & h_1(x)=0,\dots,h_{\ell}(x)=0,\\
& 	G_1(x)\succeq 0,\dots,G_s(x)\succeq 0, \\
\earay \right.
\ee
where $f(x),h_1(x),\dots,h_{\ell}(x)\in \mR[x]$, and each $G_t(x)$ is an $m_t\times m_t$ symmetric polynomial matrix.  When the  optimal value $f^*_{\min}$ of \reff{con:nsdp} is achievable at a critical point,   the first order optimality conditions \reff{kkt} can be imposed as constraints. Consequently, \reff{con:nsdp} is equivalent to 
\be  \label{sec3:nsdp:eq}
\left\{ \baray{rl}
\min & f(x)  \\
\st & h_1(x)=0,\dots,h_{\ell}(x)=0,\\
& \nabla f(x)-\sum\limits_{i=1}^{\ell} \mu_i \nabla h_{i}(x)-\sum\limits_{t=1}^{s}\nabla G_t(x)^*[\Lambda_t] = 0,\\
& 	G_1(x)\succeq 0,\dots,G_s(x)\succeq 0, \\
& 	\Lambda_1\succeq 0,\dots,\Lambda_s\succeq 0, \\
&  G_1(x)\Lambda_1=\cdots= G_s(x)\Lambda_s=0,\\
& \mu_1,\dots,\mu_{\ell}\in \mR,~ x\in \mR^n,~  \Lambda_1\in \mathcal{S}^{m_1},\dots, \Lambda_s\in \mathcal{S}^{m_s}.
\earay \right.
\ee
The above equality  constraints can be expressed as linear equations in the multiplier variables $\mu_1,\dots,\mu_{\ell},\Lambda_1,\cdots,\Lambda_s$, with  coefficients being polynomials in $x$. Hence, there exists a polynomial matrix $P(x)$ such that 
\begin{equation} \nonumber
	P(x)\cdot \left[\begin{array}{c}
		\mu_1 \\
		\vdots\\
		\mu_{\ell}\\
		\uvec{\Lambda_1} \\
		\vdots\\
		\uvec{\Lambda_s}\\
	\end{array}\right]:=\left[\begin{array}{c}
		\sum\limits_{i=1}^{\ell} \mu_i \nabla h_{i}(x)+\sum\limits_{t=1}^{s}\nabla G_t(x)^*[\Lambda_t]  \\
		\mu_1h_{1}(x)\\
		\vdots\\
		\mu_{\ell}h_{\ell}(x)\\
		\vec{G_1(x)\Lambda_1} \\
		\vdots\\
		\vec{G_s(x)\Lambda_s}\\
	\end{array}\right] =\left[\begin{array}{c}
		\nabla f(x) \\
		0 \\
		\vdots\\
		0\\
	\end{array}\right],
\end{equation}
where $\uvec{\Lambda_t}$ is the  vectorization of the upper triangular entries of $\Lambda_t$, i.e., 
\[
\uvec{\Lambda_t}=\left[\begin{array}{lllllll}
	(\Lambda_t)_{11} & \cdots & (\Lambda_t)_{1 m_t} &  (\Lambda_t)_{22} & \cdots & (\Lambda_t)_{m_t-1, m_t} & (\Lambda_t)_{m_t,m_t}
\end{array}\right]^T,
\]
and  $\vec{G_t\Lambda_t}$  is the full vectorization of the  matrix $G_t\Lambda_t$, i.e., 
{\small
\[
\vec{G_t\Lambda_t}=\left[\begin{array}{lllllll}
	(G_t\Lambda_t)_{11} & \cdots & (G_t\Lambda_t)_{1 m_t} &   \cdots &(G_t\Lambda_t)_{m_t,1} & \cdots & (G_t\Lambda_t)_{m_t,m_t}
\end{array}\right]^T,
\]
}

If the matrix $P(x)$ is nonsingular (i.e., $ P(x)$ has full column rank for all $x\in \mathbb{C}^{n}$),  it follows from   \cite[Proposition 5.2]{Tight18} that there exists  a   polynomial matrix $L(x)$ such that
\be \nonumber
L(x)P(x)=I_{\ell+\sigma(m_1)+\cdots+\sigma(m_s)}.
\ee
This implies  that
\be \nonumber
\left[\begin{array}{c}
	\mu_1 \\
	\vdots\\
	\mu_{\ell}\\
	\uvec{\Lambda_1} \\
	\vdots\\
	\uvec{\Lambda_s}\\
\end{array}\right]=L(x)\left[\begin{array}{c}
\nabla f(x) \\
0 \\
\vdots\\
0\\
\end{array}\right],
\ee 
which provides polynomial expressions for the multiplier variables,   denoted by $p_1(x),\dots,p_{\ell}(x)$, $\Theta_1(x),\cdots,\Theta_s(x)$. 
Hence,  \reff{con:nsdp} is also equivalent to 
\be  \label{nsdp:equ}
\left\{ \baray{rl}
\min & f(x)  \\
\st & h_1(x)=0,\dots,h_{\ell}(x)=0,\\
& \nabla f(x)-\sum\limits_{i=1}^{\ell} p_i(x) \nabla h_{i}(x)-\sum\limits_{t=1}^{s}\nabla G_t(x)^*[\Theta_t(x)] = 0,\\
& 	G_1(x)\succeq 0,\dots,G_s(x)\succeq 0, \\
& 	\Theta_1(x)\succeq 0,\dots,\Theta_s(x)\succeq 0, \\
&  G_1(x)\Theta_1(x)=\cdots= G_s(x)\Theta_s(x)=0.
\earay \right.
\ee

The matrix Moment-SOS relaxations can be applied to solve \reff{nsdp:equ}. Denote the polynomial matrix tuples
\begin{equation}\nonumber
	\begin{split}
		\Phi:=&\{h_1,\dots,h_{\ell}\}\cup
		\{\nabla f-\sum\limits_{i=1}^{\ell} p_i \nabla h_{i}-\sum\limits_{t=1}^{s}\nabla G_t^*[\Theta_t] \}
		\cup \{G_1\Theta_1,\dots, G_s\Theta_s\},
		\\
		\Psi:= & \left\{G_1,\dots, G_s\right\}\cup \left\{\Theta_1,\dots, \Theta_s\right\}.
	\end{split}
\end{equation}
For an order $k$, the $k$th order SOS relaxation  of \reff{nsdp:equ} is 

\be  \label{con:sos}
\left\{ \baray{rl}
\max & \gamma \\
\st &f-\gamma \in \ideal{\Phi}_{2k}+\qmod{\Psi}_{2k}.
\earay \right.
\ee
The dual problem of \reff{con:sos} is the $k$th order moment relaxation:

\be  \label{con:mom}
\left\{ \baray{rl}
\min & \langle f, z\rangle  \\
\st &~L^{(k)}_{\phi}(z)=0~(\phi \in \Phi),\\
& ~ L_{\psi}^{(k)}(z) \succeq 0~(\psi \in \Psi),\\
&\langle 1, z\rangle=1,~~M_k(z) \succeq 0,~z\in \mathbb{R}^{\mathbb{N}_{2k}^{n}}.\\
\earay \right.
\ee
Let $f^{*}_{k,sos}$ and $f^{*}_{k,mom}$ denote the optimal values of \reff{con:sos} and \reff{con:mom},  respectively.

Using similar proof techniques as in Theorem \ref{thm:fin}, we can show that the hierarchy \reff{con:sos}--\reff{con:mom} has finite convergence.

\begin{thm}
	Suppose that the  optimal value  of \reff{con:nsdp} is achievable at a critical point,    the matrix $P(x)$ is nonsingular and the set $\ideal{\Phi}+\qmod{\Psi}$ is Archimedean. Then, we have $f^{*}_{k,sos}=f^{*}_{k,mom}=f^*_{\min}$ for all  $k$ sufficiently large.
	
\end{thm}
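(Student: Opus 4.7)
The plan is to follow the template of the proof of Theorem \ref{thm:fin}, adapted to the general problem \reff{con:nsdp} with both equality and semidefinite constraints. The argument splits into three phases: a Lagrangian--derivative lemma analogous to Lemma \ref{kktfini}, a primary decomposition of $\ideal{\Phi}$, and a partition-of-unity assembly of SOS certificates.

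First, I would establish a generalization of Lemma \ref{kktfini}: on every irreducible subvariety $W \subseteq V_{\mC}(\ideal{\Phi})$ that contains a real point, $f$ attains a constant real value. The appropriate Lagrangian is
\[
\mathcal{L}(x) = f(x) - \sum_{i=1}^{\ell} p_i(x) h_i(x) - \sum_{t=1}^{s} \tr{\Theta_t(x) G_t(x)},
\]
which agrees with $f(x)$ on $V_{\mC}(\ideal{\Phi})$, since $h_i(x) = 0$ and $G_t(x)\Theta_t(x) = 0$ there. Given two points $x^{(1)}, x^{(2)} \in W$, connect them by a piecewise smooth complex path $x(t)$ lying inside $W$ (irreducible complex varieties are path-connected). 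Stratify the parameter interval by the joint rank tuple $(\rank G_1(x(t)),\dots,\rank G_s(x(t)))$. On each stratum the ranks are locally maximal, so a simultaneous local change of basis transports each $G_t(x(\cdot))$ into block form with an invertible top-left block, forcing the corresponding block of $\Theta_t(x(\cdot))$ to vanish. An argument parallel to Lemma \ref{kktfini} then gives $\tr{G_t(x(t))\, \tfrac{d\Theta_t(x(t))}{dt}} = 0$ for each $t$, and combined with the stationarity relation encoded in $\ideal{\Phi}$, yields $\tfrac{d\mathcal{L}}{dt}(x(t)) = 0$ on each stratum. Integrating gives $f(x^{(1)}) = f(x^{(2)})$.

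Next, I would transcribe the decomposition argument of Theorem \ref{thm:fin}(ii). After shifting $f$ so $f^*_{\min} = 0$, list the finitely many real values $v_1 < \cdots < v_\ell$ attained by $f$ on $V_{\mC}(\ideal{\Phi})$ with $v_{\ell_0} = 0$, and correspondingly partition $V_{\mC}(\ideal{\Phi}) = \mc{K}_0 \cup \mc{K}_1 \cup \cdots \cup \mc{K}_\ell$, where $\mc{K}_0 \cap \mR^n = \emptyset$ and $f \equiv v_i$ on $\mc{K}_i$. By primary decomposition there exist ideals $I_0,\dots,I_\ell$ with $\ideal{\Phi} = I_0 \cap \cdots \cap I_\ell$ and $\mc{K}_i = V_{\mC}(I_i)$. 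On each $I_i$, construct $\sigma_i$ with $f - \sigma_i \in I_i$ by the same four recipes used in Theorem \ref{thm:fin}: Krivine's Positivstellensatz for $I_0$; a real-emptiness argument combined with the Archimedean hypothesis on $\ideal{\Phi} + \qmod{\Psi}$ and Putinar's theorem for components with $v_i < 0$; Hilbert's Strong Nullstellensatz for $i = \ell_0$; and the $v_i^{-1} f - 1$ binomial expansion for $v_i > 0$. Finally, use \cite[Lemma 3.3]{niejac} to produce $a_0,\dots,a_\ell \in \mR[x]$ with $a_0^2 + \cdots + a_\ell^2 - 1 \in \ideal{\Phi}$ and $a_i \in \bigcap_{j\neq i} I_j$, and glue the pieces to conclude $f - f^*_{\min} + \epsilon \in \ideal{\Phi}_{2k} + \qmod{\Psi}_{2k}$ for all sufficiently large $k$ and every $\epsilon > 0$.

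The hard part will be the generalized Lagrangian-derivative lemma. In Lemma \ref{kktfini} there is only one matrix $X(x)$ and one equality constraint $\tr{X} = 1$, and the block-diagonalization exploits that clean structure. For \reff{con:nsdp} one must handle arbitrarily many $G_t$ of different sizes, equality constraints $h_i = 0$ whose polynomial multipliers $p_i(x)$ arise from inverting $P(x)$, and polynomial matrix multipliers $\Theta_t(x)$ in place of numerical ones. The stratification must be by the joint rank tuple, and one must verify that the local block decomposition is compatible with the polynomial expressions $p_i(x)$ and $\Theta_t(x)$ coming from $L(x)$, so that the identity $\tr{G_t\, \tfrac{d\Theta_t}{dt}} = 0$ persists on each stratum. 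Once this technical step is in place, the remainder of the proof is a routine adaptation of Theorem \ref{thm:fin}(ii), since nonsingularity of $P(x)$ supplies the polynomial representation of the multipliers and the Archimedean hypothesis supplies Putinar's theorem wherever it is needed.
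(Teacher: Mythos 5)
Your proposal is correct and follows exactly the route the paper intends: the paper itself gives no detailed proof of this theorem, stating only that it follows ``using similar proof techniques as in Theorem~\ref{thm:fin},'' and your plan --- a generalized Lagrangian--derivative lemma via joint-rank stratification, primary decomposition of $\ideal{\Phi}$, and the partition-of-unity assembly of SOS certificates under the Archimedean hypothesis --- is precisely that adaptation, with the genuinely delicate step (the multi-matrix analogue of Lemma~\ref{kktfini}) correctly identified and sketched in a way that does go through.
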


\hspace{.5em}


\bigskip
\noindent
{\bf Acknowledgements.}
The authors are deeply grateful to Prof. Samuel Burer for his motivating insights and valuable suggestions on this paper.

\end{document}